\newcommand{\xmark}{\ding{55}}
\newtheorem{thm}{\protect\theoremname}
\newtheorem{defn}[thm]{\protect\definitionname}
\newtheorem{lem}[thm]{\protect\lemmaname}
\newtheorem*{lem*}{\protect\lemmaname}
\newtheorem{rem}[thm]{\protect\remarkname}
\providecommand{\definitionname}{Definition}
\providecommand{\lemmaname}{Lemma}
\providecommand{\remarkname}{Remark}
\providecommand{\theoremname}{Theorem}
\icmltitlerunning{Spectral Frank-Wolfe Algorithm: Strict Complementarity and Linear Convergence}
\begin{document}

\twocolumn[
\icmltitle{Spectral Frank-Wolfe Algorithm: Strict Complementarity and Linear Convergence}




\begin{icmlauthorlist}
\icmlauthor{Lijun Ding}{orie}
\icmlauthor{Yingjie Fei}{orie}
\icmlauthor{Qiantong Xu}{cs}
\icmlauthor{Chengrun Yang}{ece}
\end{icmlauthorlist}

\icmlaffiliation{orie}{School of ORIE, Cornell University, Ithaca, NY 14850, USA}
\icmlaffiliation{ece}{School of Electrical and Computer Engineering, Cornell University, Ithaca, NY 14850, USA}
\icmlaffiliation{cs}{Facebook AI Research, Menlo Park, CA 94025, USA}

\icmlcorrespondingauthor{Lijun Ding}{ld446@cornell.edu}

\icmlkeywords{Machine Learning, ICML}

\vskip 0.3in
]



\printAffiliationsAndNotice{}  

\global\long\def\fronorm#1{\|#1\|_{\text{F}}}

\global\long\def\twonorm#1{\|#1\|_{2}}

\global\long\def\opnorm#1{\|#1\|_{\text{op}}}

\global\long\def\nucnorm#1{\|#1\|_{*}}

\global\long\def\infnorm#1{\|#1\|_{\infty}}

\global\long\def\abs#1{\left|#1\right|}

\global\long\def\real{\mathbb{R}}

\global\long\def\integer{\mathbb{Z}}

\global\long\def\inprod#1#2{\langle#1,#2\rangle}

\global\long\def\symMat{\mathbb{S}}

\global\long\def\specsplex{\mathcal{S}_n}

\global\long\def\dm{n}

\global\long\def\cons{m}

\global\long\def\tr{\mathbf{tr}}

\global\long\def\Amap{\mathcal{A}}

\global\long\def\xsol{X_{\star}}

\global\long\def\rsol{r_{\star}}

\global\long\def\zsol{Z_{\star}}

\global\long\def\ssol{s_{\star}}

\global\long\def\face#1{\mathcal{C}_{#1}}

\global\long\def\onevec{\mathbf{1}}

\global\long\def\ddiag{\mbox{ddiag}}

\global\long\def\diag{\mbox{diag}}

\global\long\def\range{\mbox{range}}

\global\long\def\dist{\mbox{dist}}

\global\long\def\nullspace{\mbox{nullspace}}

\global\long\def\rank{\mbox{rank}}

\global\long\def\proj{P}

\global\long\def\relint{\mbox{relint}}

\global\long\def\dualdim{k_\star}

\global\long\def\dspacerep{V_\star}

\global\long\def\EV{\mathbf{EV}}


\newcommand{\blue}[1]{\textcolor{blue}{#1}}

\begin{abstract}
We develop a novel variant of the classical Frank-Wolfe algorithm, which we call spectral Frank-Wolfe, for convex optimization over a spectrahedron. The spectral Frank-Wolfe algorithm has a novel ingredient: it computes a few eigenvectors of the gradient and solves a small-scale SDP in each iteration. Such procedure overcomes slow convergence of the classical Frank-Wolfe algorithm due to ignoring eigenvalue coalescence. We demonstrate that strict complementarity of the optimization problem is key to proving linear convergence of various algorithms, such as the spectral Frank-Wolfe algorithm as well as the projected gradient method and its accelerated version. 
\end{abstract}

\section{Introduction}

We consider solving the following optimization problem with the decision
variable $X\in\real^{\dm\times\dm}$:
\begin{align}
\mbox{minimize}\quad & f(X):=g(\Amap X)+\inprod CX\label{eq:Mainoptimization}\\
\mbox{subject to\ensuremath{\quad}} & \tr(X)=1\quad X\succeq0.\nonumber 
\end{align}

\paragraph{Problem setup.}
The setup of Problem~(\ref{eq:Mainoptimization}) is as follows. We assume $C \in\real^{\dm\times\dm}$ is a symmetric matrix.  The constraint $X\succeq0$ means that $X$ is symmetric and positive semidefinite.
We assume that $\Amap:\symMat^{\dm}\rightarrow\real^{\cons}$ is a linear map
from the set of symmetric matrices $\symMat^{\dm}$ to the $m$-dimensional Euclidean space. We also assume that the
function $g:\real^{\cons}\rightarrow\real$ is differentiable and its
gradient $\nabla g$ is $L_{g}$-Lipschitz continuous. We use $\tr(\cdot)$ to denote the standard
trace operation, the sum of diagonal entries of the input matrix. We denote by $\specsplex$ the feasible region of Problem~(\ref{eq:Mainoptimization}). The set $\specsplex$ is called the spectrahedron, which 
is nonempty and compact. Hence Problem (\ref{eq:Mainoptimization})  always has an optimal solution. 
In this paper, we assume Problem~(\ref{eq:Mainoptimization})
admits a unique optimal solution $\xsol$ with rank $\rsol$ for the sake of simplicity.
The main results, Theorem \ref{thm:MainAlgorithmTheorem} and \ref{thm:MainStructralTheorem} below, can be adapted to the setting where multiple optimal solutions exist; see Section \ref{sec: uniqueness} in the Appendix for a further discussion. It is worth noting that for almost all matrix $C$, the solution of Problem~(\ref{eq:Mainoptimization}) is indeed
unique \citep[Corollary 3.5]{drusvyatskiy2011generic}.

\paragraph{Applications.} The optimization problem covers many low rank matrix recovery problems
including matrix sensing \cite{recht2010guaranteed}, matrix completion \cite{candes2009exact,jaggi2010simple}, phase retrieval
\cite{candes2015phase,yurtsever2017sketchy}, and blind deconvolution \cite{ahmed2013blind}. The constraints $X\succeq0$ and
$\tr(X)=1$ impose low-rankness on the solution. The rank $\rsol$ of optimal solutions in these applications 
is expected to be small comparing to the problem dimension $\dm$. We note that
the following problem:
\begin{equation}
\begin{aligned}\label{eq: tracenormballformulation} 
\mbox{minimize}_{\nucnorm X\leq\alpha}\quad f(X),
\end{aligned} 
\end{equation}
is sometimes a more direct optimization formulation for aforementioned low
rank matrix recovery problems. Since Problem~(\ref{eq: tracenormballformulation}) can be re-formulated
as Problem~(\ref{eq:Mainoptimization})   \cite{jaggi2010simple}, we consider Problem~(\ref{eq:Mainoptimization})
as our main focus of study in this paper.

\paragraph{Background and related works.} A natural but costly algorithm for solving~\eqref{eq:Mainoptimization} is using the projected gradient descent method (PGD) or its accelerated version (APGD) \cite{nesterov2013introductory}. Although 
the iteration complexity of PGD or APGD is considerably low,\footnote{ PGD or APGD achieves an $\epsilon$-approximate solution in $\mathcal{O}(\log(\frac{1}{\epsilon}))$ iterations for strongly convex $f$. APGD achieves an  $\epsilon$-approximate solution $\mathcal{O}(\frac{1}{\sqrt{\epsilon}})$ for general smooth $f$.} 
each of their iteration requires computing a full eigenvalue decomposition of an $\dm \times \dm$ matrix, which scales as 
$\mathcal{O}(n^3)$~\cite{trefethen1997numerical}. The high per-iteration cost prevents their large-scale deployment. 
\begin{algorithm}[tb]
	\caption{Frank-Wolfe with line search}
	\label{alg: Frank_wolfe}
	\begin{algorithmic}
		\STATE {\bfseries Input:} initialization $X_0\in\specsplex$
		\FOR{$t=1,2,\dots,$ }
		\STATE \textbf{Eigenvalue computation:} compute an eigenvector $v$ of $\nabla f(X_t)$ associated with smallest 
		eigenvalue. 
		\STATE \textbf{Line search:} solve $\hat{\eta}=\arg\min_{\eta \in [0,1]}f(\eta X_{t}+(1-\eta)vv^\top )$  and set $X_{t+1} = \hat{\eta} X_{t}+(1-\hat{\eta})vv^\top$.
		\ENDFOR	
	\end{algorithmic}
\end{algorithm}
\begin{algorithm}[tb]
	\caption{Generalized BlockFW (G-BlockFW)}
	\label{alg:generalizedBlockFW}
	\begin{algorithmic}
		\STATE {\bfseries Input:} initialization $X_0\in\specsplex$, a step size $\eta\in[0,1]$, 
		a smooth parameter $\beta$, and an integer $k>0$
		\FOR{$t=1,2,\dots,$ }
		\STATE \textbf{Eigenvalue computation:} compute top $k$ eigenvalues 
		$(\lambda_1,\dots,\lambda_r)$ and their eigenvectors $V=[v_1,\dots,v_k]$ of $X_t-\frac{1}{\eta \beta}\nabla f(X_t)$.
		\STATE \textbf{Eigenvalue projection:} project $(\lambda_1,\dots,\lambda_k)$ to the $k$-dimensional 
		probability simplex $\{x\in \real^k\mid \sum_{i=1}^k x_i =1,x_i\geq 0 \}$, and get the projected point $\Lambda$. 
		\STATE \textbf{Forming a new iterates:} set $X_{t+1}= (1-\eta)X_t+\eta V\diag(\Lambda)V^\top$. 
		\ENDFOR	
	\end{algorithmic}
\end{algorithm}
Hence, projection-free methods are sought, such as the Frank-Wolfe method (FW) \cite{frank1956algorithm,jaggi2013revisiting} presented in Algorithm \ref{alg: Frank_wolfe}. In the spectrahedron setting, each step only 
requires computing \emph{one} eigenvector of the gradient of $f$, which can be efficiently done using the Lanczos method \cite{kuczynski1992estimating} by taking advantage of the structure of $\nabla f(X)=\Amap^*(\nabla g)(\Amap X) +C$ as well as the sparsity of $\Amap$ and $C$.  FW converges to an $\epsilon$-approximate solution\footnote{A matrix $X$ is $\epsilon$-approximate solution to Problem \eqref{eq:Mainoptimization} if $X$ is feasible and $f(X)-f(\xsol)\leq \epsilon$.}
within $\mathcal{O}(\frac{1}{\epsilon})$ many iterations. However, the iteration complexity $\mathcal{O}(\frac{1}{\epsilon})$ is tight as shown in \cite{garber2016faster} even if $f$ is strongly convex and no structural assumption is posed on the solution of \eqref{eq:Mainoptimization}. 
Considerable recent research effort  \cite{garber2016faster,fd2017extended,allen2017linear,garber2019linear} has focused on incorporating the low-rankness
of solution $\xsol$. Of particular relevance to our work are \citet{garber2019linear} and \citet{allen2017linear}:
\begin{itemize}
    \vspace{-0.5cm}
    \setlength\itemsep{0em}
	\item \citet{garber2019linear} shows that Algorithm~\ref{alg: Frank_wolfe}  converges linearly 
	given that the solution is rank \emph{one}, and an eigengap assumption on the gradient $\nabla f(\xsol)$ at the optimal solution is satisfied. We note that the rank-one assumption is crucial for the linear convergence of Algorithm~\ref{alg: Frank_wolfe} to hold. As we will demonstrate in Section \ref{sec: numerics}, if the solution is not rank one, Algorithm~\ref{alg: Frank_wolfe} gets stagnant and behaves in the worst case as  $\mathcal{O}(\frac{1}{\epsilon})$.
	
	  \item  \citet{allen2017linear} proposes an algorithm called BlockFW, which is re-formulated as Algorithm \ref{alg:generalizedBlockFW} for our setting and renamed as  generalized BlockFW(G-BlockFW) \footnote{We note that BlockFW is \emph{not} designed for \eqref{eq:Mainoptimization}, but rather for \eqref{eq: tracenormballformulation}. Since \eqref{eq: tracenormballformulation} covers~\eqref{eq:Mainoptimization}, we renamed the algorithm as G-BlockFW.}. It computes only $k$ eigenvectors in each step, and converges linearly so long as $k\geq \rsol=\rank(\xsol)$ and $f$ is strongly convex.  However, the method relies critically on the assumption $k\geq \rsol$: no convergence guarantees can be made if this assumption fails. Indeed, we will demonstrate in Section \ref{sec: numerics} that if $k<\rsol$, G-BlockFW  gets stuck at moderate accuracy and cannot make further progress.\footnote{\citet{allen2017linear} gives an adaptive $k$ selection procedure which works well in their experiments, but there is no theoretical guarantee for the procedure.} Moreover, the method needs 
	to store iterates explicitly to compute the eigenvectors. This not only incurs an extra $\mathcal{O}(n^2)$ space complexity, but also increases the burden of 
	computing eigenvectors as the iterates themselves have no structure to be exploited for fast eigenvector computation.\footnote{Actually \citet{allen2017linear}  provides a method to avoid the extra space and time costs. However, the method requires knowledge of the strong convexity parameter, which is unavailable in all experiments they perform.}
\end{itemize}
In summary, previous methods converge linearly only when the optimal solution is rank one, or the number of eigenvectors computed in each iteration is no smaller than the rank of the optimal solution.

\begin{table*}\label{table: comparison}
\centering
\begin{tabular}{cccc}
\toprule 
    \multirow{2}{*}{Algorithm} & \multicolumn{3}{c}{Convergence Rate} \\
    \cmidrule(lr){2-4}
	& Worst  & Linear  & Condition  \\
\midrule
FW (Alg.~\ref{alg: Frank_wolfe}) & $\frac{8L_f}{t}$ & $(1-\frac{\delta}{12L_f})^t$ & $\rsol=1$ and strict comp.  \\ 
G-BlockFW (Alg.~\ref{alg:generalizedBlockFW}) & \xmark  & $(1-\frac{\gamma}{2L_f})^t$ & $k\geq \rsol$ and QG \\
SpecFW (Alg.~\ref{alg:spectral_frank_wolfe}) & $\frac{8L_f}{t}$ & $(1-\frac{\min(\delta,\gamma)}{12L_f})^t$ & $k\geq \rsol $, QG, and strict comp. \\
\bottomrule
\end{tabular} 
\caption{Comparision of FW, G-BlockFW and SpecFW. Here, we assume $f$ has gradients $\nabla f$ that are $L_f$-Lipschitz. The optimal
solution rank is $\rsol=\rank(\xsol)$. We let $t$ be the number of iterations.
Convergence rates are measured by $f(X_t)-f(\xsol)$. We set $\delta$ to be the difference
between the smallest eigenvalue and the $(\rsol+1)$th-smallest eigenvalue of 
$\nabla f(\xsol)$, that is, $\delta =\lambda_{n-\rsol}(\nabla f(\xsol))- \lambda_{n}(\nabla f(\xsol))$.  "Strict comp." means strict complementarity (Definition \ref{def:(Strict-Complementarity)-Suppose}). "QG" means quadratic growth with parameter 
$\gamma$ (Definition \ref{def: growthCondition}). Both FW and SpecFW have 
 burn-in phases which are bounded by $\frac{72L_f^3}{(\min\{\gamma,\delta\})^3}$. Here, the burn-in phase 
is the number of iterations in which the method converges with standard
rate $L_f/t$, before shifting to the faster rate (if linear convergence condition is satisfied).
The convergence rate of G-BlockFW can be found in Lemma \ref{lem: linearConvergenceOfBlockFW} in 
Section \ref{sec: lemmaForSection4} of the Appendix.}
\end{table*} 

\paragraph{Our contributions.} The contribution of this work is two-fold. On the problem structure side:
\begin{itemize}
    \vspace{-0.3cm}
    \setlength\itemsep{0em}
	\item We show that the eigengap assumption in \cite{garber2019linear} is equivalent to 
	the strict complementarity condition, a well-known regularity condition of semidefinite programming~\cite{alizadeh1997complementarity};
	see Section~\ref{sec: StrictComplementarityAndGrowth} for more detail.
	\item Based on the eigengap condition, or the equivalent strict complementarity  condition, we show that 
	Problem \eqref{eq:Mainoptimization} satisfies the quadratic growth property (Definition~\ref{def: growthCondition} below)  when 
	the outer function $g$ is strongly convex over the feasible region $\specsplex$ of Problem \eqref{eq:Mainoptimization}, which is true for all the application being considered. This governs the linear convergence of many first order methods such as PGD, APGD, and our method, Spectral Frank Wolfe. 
\end{itemize}
On the algorithm side, we propose a new algorithm called Spectral Frank-Wolfe (SpecFW) in Section \ref{sec: SpectralFrank-WolfeAndItsConvergenceRate}, which has the following 
properties:
\begin{itemize}
    \setlength\itemsep{0.2pt}
	\item In each of its iteration, it computes $k$ eigenvectors using \emph{only} the current gradient information. 
	\item In each of its iteration, it solves a small-scale sub-problem efficiently by APGD for small $k$.
	\item It always converges at the rate $\mathcal{O}(\frac{1}{\epsilon})$ no matter what choice of $k$ is. 
	\item It converges linearly when $k\geq \rsol$, and the strict complementarity and  
	quadratic growth condition are satisfied. In particular, we do \emph{not} require $f$ to be strongly convex or the rank $\rsol$ to be $1$.
	\item It can easily incorporate the matrix sketching idea from \citet{tropp2017practical} and achieves the so-called storage 
	optimality discussed in \citet{yurtsever2017sketchy}. The sketching procedure obviates the need for storing the full decision matrix $X$ throughout iterations, thereby saving $\mathcal{O}(n^2)$ space.\footnote{Interested readers can find the procedure in Section \ref{sec: SpecFWmatrixSketching} in the Appendix.
		We note the matrix sketching idea cannot be combined with G-BlockFW easily to avoid storing $X$, as G-BlockFW uses a sum of the current iterate and current gradient to compute the eigenvectors, which destroys the fast matrix-vector product property of the gradient.}  
\end{itemize}

\paragraph{Organization.} The rest of the paper is organized as follows. 
In Section \ref{sec: StrictComplementarityAndGrowth}, we explain 
the concept of strict complementarity and the  classical Frank-Wolfe algorithm, and how they motivate our Spectral Frank-Wolfe. In Section \ref{sec: SpectralFrank-WolfeAndItsConvergenceRate}, we present the Spectral Frank-Wolfe and its convergence guarantees. In Section \ref{sec: QuadraticGrowthAndLinearConvergence}, we show that the strict complementarity enforces the quadratic growth condition whenever $g$ is strongly convex on $\specsplex$. Finally, we demonstrate numerically the effectiveness of the Spectral Frank-Wolfe in Section~\ref{sec: numerics}. 

\paragraph{Notation.} For a symmetric matrix $A\in\protect\symMat^{\protect\dm}$,
we denote its $i$-th largest eigenvalue as $\lambda_{i}(A)$. The operator two norm, nuclear norm,
and Frobenius norm are denoted as $\opnorm{A}$, $\nucnorm{A}$, and $\fronorm{A}$, respectively. The inner product $\inprod{\cdot}{\cdot}$ on 
symmetric matrices is the standard trace inner product. We also equip $\mathbb{R}^m$ with the dot product.
For a linear map $\mathcal{B:\protect\symMat}^{d}\rightarrow\protect\real^{l}$, the
adjoint map of $\mathcal{B}$ is denoted as $\mathcal{B}^{*}$. 
We also define its largest and smallest singular values as $\protect\opnorm{\mathcal{B}}=\sigma_{\max}(\mathcal{B})=\max_{\protect\fronorm A=1}\protect\twonorm{\mathcal{B}(A)}$
and $\sigma_{\min}(\mathcal{B})=\min_{\protect\fronorm A=1}\protect\twonorm{\mathcal{B}(A)}$.
Given a matrix $V\in\protect\real^{d\times r}$, we denote the restriction of $\mathcal{B}$ to $V$ as  $\mathcal{B}_{V}:\protect\symMat^{r}\rightarrow\protect\real^{l}$
by $\mathcal{B}_{V}(S)=\mathcal{B}(VSV^{\top})$ for any $S\in\protect\symMat^{r}$. 

\section{Motivating SpecFW from complementarity and Frank-Wolfe}\label{sec: StrictComplementarityAndGrowth}
In this section, we explain the motivations of the  spectral 
Frank-Wolfe from strict complementarity and its relationship with 
the classical Frank-Wolfe.
\subsection{Observation from complementarity}
Let first introduce the KKT condition to see what complementarity means. 
\paragraph{KKT condition.} By Slater's condition for \eqref{eq:Mainoptimization} and the fact that the feasible region $\specsplex$ is compact, the following KKT condition of (\ref{eq:Mainoptimization}) always holds: there is some
dual optimal solution $\zsol\succeq 0$ and $\ssol\in\real$ such that\footnote{If there are multiple primal optimal solutions, then the KKT condition holds for any one of them.}
\begin{align}
\nabla f(\xsol)-\zsol-\ssol I & =0,\quad\text{(First Order Condition)}\label{eq:KKT}\\
\inprod{\zsol}{\xsol} & =0,\quad\text{(Complementarity)}\nonumber \\
\tr(\xsol) & =1,\quad\text{(Linear Constraint Feasibility)}\nonumber \\
\zsol,\xsol & \succeq0.\quad\text{ (PSD Feasibility)}\nonumber 
\end{align}
Here $I$ is the identity matrix in $\symMat^{\dm}$. 
We prove in Lemma \ref{lem: uniquenessOfTheDual} in the Appendix
that the dual 
solution $(\zsol,\ssol)$ is actually unique. 
   
\paragraph{Complementarity: extract $\xsol$ from $\zsol$.} we first note that using $\zsol,\xsol\succeq0$ and complementarity  $\inprod{\zsol}{\xsol}=0$,
we have $\zsol\xsol=0$. This equality implies that 
\begin{align}\label{eq: rangexsolNullspaceZsol}
\range(\xsol)\subset\nullspace(\zsol),
\end{align}
and 
\begin{align}\label{eq: inequalityrankxsolnullspacedimZ}
\rsol=\rank(\xsol)\leq\dim(\nullspace(\zsol))=:\dualdim.
\end{align}

Hence, if we can compute a matrix $\dspacerep \in \real^{\dm \times \dualdim}$ with orthonormal 
columns that span the null space of $\zsol$, and solve for 
\begin{equation}
\begin{aligned} \label{eq: reducedProblem}
S_\star = \arg\min_{S\in \mathcal{S}_{\dualdim}} f(\dspacerep S \dspacerep^\top),
\end{aligned}
\end{equation}
then we get the primal optimal solution $\xsol =\dspacerep S_\star \dspacerep^\top$.

We note that it is 
necessary to optimize over the $\dualdim$-spectrahedron $\mathcal{S}_{\dualdim}$ instead of just a 
$\dualdim$-dimensional probability simplex, as $\dspacerep$ may not be the eigenvectors of $\xsol$ for $\dualdim>1$. 
Problem 
\eqref{eq: reducedProblem} can be solved by APGD rapidly so long as $\dualdim$, the \emph{size}
of $S$, is small.

This naturally leads to the following questions: 
\begin{enumerate}
	\item Problem \eqref{eq: reducedProblem} is easy to solve only if $\dualdim$ is small; yet for now we only have 
	$\dualdim\geq \rsol$. With $\rsol$ expected to be small, can we hope for $\dualdim =\rsol$ to hold, 
	so that $\dualdim$ is small as well?
	\item Suppose we have $\dualdim=\rsol$, can we compute $\dspacerep$ exactly or approximate it well enough?
\end{enumerate}

We answer the first question in the next section by defining strict complementarity 
and establishing its equivalence to an eigengap condition on $\nabla f(\xsol)$. 
To answer the second question, we draw relationship between the first order condition in \eqref{eq:KKT} 
and the classical Frank-Wolfe algorithm in Section \ref{sec: fwmotivation}. 

\subsection{Strict complementarity}\label{sec: strictcomplementarity}
We answer why we expect $\rsol =\dualdim$ in this section. 
Using the rank-nullity theorem, we see that the equation $\rsol=\rank(\xsol)\leq\dim(\nullspace(\zsol))$
is equivalent to 
\[
\rank(\xsol)+\rank(\zsol)\leq\dm.
\]
Strict complementarity \cite{alizadeh1997complementarity} 
assumes that we have equality instead of
inequality.
\begin{defn}
	\label{def:(Strict-Complementarity)-Suppose}(Strict Complementarity)
	Let $\xsol,\zsol$ and $\ssol$ satisfy the KKT condition (\ref{eq:KKT}).
	We say that Problem (\ref{eq:Mainoptimization}) (or the pair $(\xsol,\zsol)$) satisfies strict complementarity if
	\[
	\rank(\xsol)+\rank(\zsol)=\dm.
	\]
\end{defn}
It is immediately clear that using the rank-nullity theorem again, we see that strict complementarity 
is equivalent to 
\[
\rsol =\dualdim, 
\]
which is what we desire. 
By \eqref{eq: rangexsolNullspaceZsol} and given that the solution rank is $\rsol$, 
strict complementarity is equivalent to 
\begin{equation}
\lambda_{n-\rsol}(\zsol)>0.\label{eq:equivalenceOfStrictComplementarity}
\end{equation}
Equation~\eqref{eq: rangexsolNullspaceZsol} also implies that we always have for all $i=1,\dots,\rsol$,
\begin{align}
\lambda_{n-\rsol+i}( & \zsol)=0.\label{eq:zsolsmalleigenvaluearezero}
\end{align}
\paragraph*{Relation with the eigengap assumption.}

In \citet{garber2019convergence,garber2019linear}, the author proposed an eigengap condition:
\[
\lambda_{n-\rsol}(\nabla f(\xsol))-\lambda_{n}(\nabla f(\xsol))>0.
\]
This is in fact  equivalent to strict complementarity: since $\nabla f(\xsol)=\zsol+\ssol I$,
we have 
\begin{align*}
&\lambda_{n-\rsol}(\nabla f(\xsol))-\lambda_{n}(\nabla f(\xsol))\\
 =& \lambda_{n-\rsol}(\zsol+\ssol I)-\lambda_{n}(\zsol+\ssol I)\\
 = &\lambda_{n-\rsol}(\zsol)+\ssol-\lambda_{n}(\zsol)-\ssol\\
 = & \lambda_{n-\rsol}(\zsol),
\end{align*}
where the last step is due to (\ref{eq:zsolsmalleigenvaluearezero}). Using 
(\ref{eq:equivalenceOfStrictComplementarity}), we deduce the equivalence.

\paragraph*{Why strict complementarity should hold.}

Strict complementarity as shown in \citet{drusvyatskiy2011generic} holds for almost all $C$ (see 
Lemma \ref{lem: genericstrictcomplementarity} for a more detailed derivation).
We will also verify this assumption numerically in our experiments in Section~\ref{sec: numerics}.
Moreover, as demonstrated in \citet[Lemmas 2 and  10]{garber2019linear}, such assumption should hold if
we expect the solution rank $\rsol$ to be stable under small
perturbations. 

\subsection{FW and approximation of $\nullspace(\zsol)$}\label{sec: fwmotivation}
We have just argued why we expect $\rsol =\dualdim$ should hold for Problem \eqref{eq:Mainoptimization}. In this section, 
we draw relation of FW and approximation of $\nullspace(\zsol)$. 

Denote by $\EV_r(A)$ the eigenspace of the smallest $r$ eigenvalues of a matrix $A\in \symMat^{\dm}$. 
In view of the first order condition \eqref{eq:KKT}, we have  
\begin{equation}
\begin{aligned}\label{eq: gradientAndDual}
\EV_{\dualdim}(\nabla f(\xsol)) = \nullspace(\zsol). 
\end{aligned} 
\end{equation} 
Hence $ \nullspace(\zsol)$ can be identified using the gradient of $f$ at $\xsol$.  

Note that FW indeed uses the eigenvector corresponding to the smallest eigenvalue of $\nabla f(X_t)$ in each of 
its  iteration, and therefore it tries to approximate $\EV_{\dualdim}(\nabla f(\xsol))$. This is 
the main intuition that linear convergence of FW can be established  when $\rsol=1$  as in \citet{garber2019linear}. It also reveals that FW fails to converge in a linear rate for $\dualdim>1$, as approximation using 
one eigenvector is not enough for a $\dualdim$-dimensional space. Also, 
from \eqref{eq:zsolsmalleigenvaluearezero} and the first order condition in 
the KKT condition, we see the smallest 
$\dualdim$ eigenvalues of the gradient coalesce, and hence it 
is important to compute the $\dualdim$-dimensional space 
to attain 
better numerical stability and accuracy.  
Hence, to overcome this issue, we need to  compute at least $\dualdim$ eigenvectors  and solve a sub-problem like \eqref{eq: reducedProblem} in each iteration. 

The above discussion motivates our algorithm, the Spectral Frank-Wolfe (Algorithm \ref{alg:spectral_frank_wolfe}),
described in the next section. 

\section{Spectral Frank-Wolfe and its Convergence guarantees}\label{sec: SpectralFrank-WolfeAndItsConvergenceRate}
In this section, we describe the Spectral Frank-Wolfe algorithm and its theoretical 
guarantees.
\subsection{The Spectral Frank-Wolfe algorithm}
The Spectral Frank-Wolfe algorithm is presented in Algorithm~\ref{alg:spectral_frank_wolfe}. We highlight its key mechanism as follows.
\begin{algorithm}[tb]
	\caption{Spectral Frank-Wolfe}
	\label{alg:spectral_frank_wolfe}
	\begin{algorithmic}
		\STATE {\bfseries Input:} initialization $X_0\in\specsplex$, an integer $k>0$
		\FOR{$t=1,2,\dots,$ }
		\STATE \textbf{Eigenvalue computation:} compute the $k$ eigenvectors, $v_1,\dots,v_k$ of $\nabla f(X_t)$ associated with the $k$ smallest 
		eigenvalues, and form the matrix $V = [v_1,\dots,v_k]\in \real^{\dm \times k}$.
		\STATE \textbf{Solving a small-scale SDP:} solve $\min_{\eta+\tr(S)=1,S\succeq 0,\eta \geq 0}f(\eta X_{t}+VSV^{\top})$ and get an optimal solution $(\hat{S},\hat{\eta})$. 
		\STATE \textbf{Forming a new iterate:} set $X_{t+1} = \hat{\eta} X_{t}+V\hat{S}V^{\top}$.
		\ENDFOR	
	\end{algorithmic}
\end{algorithm}

\paragraph{Solving a small-scale SDP.} 
The small-scale semidefinite programming (SDP) 
\begin{equation}
\begin{aligned}\label{eq: smallksubproblem}
\min_{\eta+\tr(S)=1,S\succeq 0,\eta \geq 0}f(\eta X_{t}+VSV^{\top}).
\end{aligned}
\end{equation}
can be solved easily using APGD since  projection to 
the set $\{(\eta,S)\mid \eta+\tr(S)=1,S\succeq 0, \eta\geq 0\}$ only requires an eigenvalue decomposition of a symmetric matrix of size $k$ and a projection to the $(k+1)$-dimensional probability simplex. The correctness of the procedure for projection can be verified using arguments in  \citet[Lemma 3.1]{allen2017linear},
and \citet[Lemma 6]{garber2019convergence}.
We note that when evaluating gradient is very expensive, instead of minimizing $f(\eta X_{t}+VSV^{\top})$, one can also minimize an upper bound 
of it (and the guarantees in the next section continue to hold). This is discussed in Section \ref{sec: upperboundf} in the Appendix. 

\paragraph{Averaging with current $X_t$.} In addition to the eigenvectors from the current gradient, we also 
utilize the information of previous iterates when solving the small-scale SDP \eqref{eq: smallksubproblem}.
This follows the same spirit as the classical Frank-Wolfe, which performs a line search over the current iterate and the new 
atom $vv^\top$. This averaging scheme stabilizes the algorithm and facilitates the $\mathcal{O}(\frac{1}{\epsilon})$ 
convergence rate. 

\paragraph{The choice of $k$.} From the proof of the convergence in the next section, it can be observed that 
so long as $k\geq \dualdim$, Algorithm~\ref{alg:spectral_frank_wolfe}  converges linearly. Of course, one may not know $\dualdim$ in 
advance. In this case, $k$ may be taken as the largest value subject to the user's computational budget or the 
largest rank of the solution the user can afford in terms of storage. An adaptive strategy 
may also be employed based on the progress of objetive value decay as in \citet[Section 6.2]{allen2017linear}. We do not further the discussion of this issue due to the space limit. 

\subsection{Theoretical  guarantees}

To state our result, we first define the notion of quadratic growth. 

\begin{defn}[Quadratic Growth (QG)]\label{def: growthCondition}
	We say that the optimization problem \eqref{eq:Mainoptimization} satisfies quadratic growth with parameter $\gamma>0$, if for every feasible $X\in \specsplex$ there holds
	\[
		f(X)-f(\xsol)\geq\gamma \fronorm{X-\xsol}^2.
		\]
\end{defn}

The quadratic growth condition is necessary for linear convergence of 
gradient descent type methods as shown in ~\citet[Theorem 13]{necoara2019linear}. 
Hence we should expect it to hold if we are to show linear convergence of Frank-Wolfe methods.
The condition automatically holds for strongly convex $f$, and more broadly,
it is satisfied for almost all $C$ so long as 
$g$ is semi-algebraic, as shown in~\citet[Corollary 4.8]{drusvyatskiy2016generic}. 
In Section \ref{sec: QuadraticGrowthAndLinearConvergence}, we show that
strict complementarity and strong convexity of the outer function $g$ (but not $f$)
implies quadratic growth, as well as an explicit formula of $\gamma$ in terms of the
solution $\xsol$, the map $\Amap$, and smoothness and strong convexity parameters
of $g$. 
 
We now state the theoretical guarantees for our Algorithm~\ref{alg:spectral_frank_wolfe}. 
\begin{thm}
	\label{thm:MainAlgorithmTheorem}Suppose strict complementarity holds
	for Problem (\ref{eq:Mainoptimization}), the optimal solution $\xsol$
	is unique with rank $\rsol$, the function $g$ has $L_{g}$-Lipschitz continuous
	gradients, Problem (\ref{eq:Mainoptimization}) satisfies quadratic growth
	with parameter $\gamma$, and the choice of $k$ satisfies $k\geq\rsol=\dualdim$. Define $h_{t}=f(X_{t})-f(\xsol)$
	for each $t$, and $\beta=\opnorm{\Amap}^{2}L_{g}$. Then for all
	$t$,
	we have 
	\begin{equation}
	f(X_{t})-f(\xsol)\leq\frac{8\beta}{t}.\label{eq:firstpartofthm3}
	\end{equation}
	For all $t\geq T_{0}=\frac{72\beta^{3}}{\gamma\lambda_{n-\rsol}^{2}(\zsol)}$, we have 
	\begin{equation}
	h_{t+1}\leq\left(1-\min\left\{ \frac{\gamma}{4\beta},\frac{\lambda_{n-\rsol}(\zsol)}{12\beta}\right\} \right)h_{t}.\label{eq:secondparthm3}
	\end{equation}
\end{thm}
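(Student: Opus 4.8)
I would treat the two displayed inequalities separately.

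\emph{The bound \eqref{eq:firstpartofthm3} (standard Frank--Wolfe rate).} The key observation is that the small-scale SDP \eqref{eq: smallksubproblem} has, among its feasible points, the classical Frank--Wolfe iterate $(1-\alpha)X_{t}+\alpha vv^{\top}$ for every $\alpha\in[0,1]$, where $v$ is the eigenvector of $\nabla f(X_{t})$ of smallest eigenvalue (one of the columns of $V$). Since $X_{t+1}$ minimizes $f$ over the subproblem, $f(X_{t+1})\le f((1-\alpha)X_{t}+\alpha vv^{\top})$ for all $\alpha$. As $\nabla f=\Amap^{*}(\nabla g)(\Amap\cdot)+C$ is $\beta$-Lipschitz with $\beta=\opnorm{\Amap}^{2}L_{g}$, the descent lemma together with $\fronorm{vv^{\top}-X_{t}}\le2$ on $\specsplex$ gives $f((1-\alpha)X_{t}+\alpha vv^{\top})\le f(X_{t})+\alpha\inprod{\nabla f(X_{t})}{vv^{\top}-X_{t}}+2\beta\alpha^{2}$; since $vv^{\top}$ minimizes $\inprod{\nabla f(X_{t})}{\cdot}$ over $\specsplex$ and by convexity $\inprod{\nabla f(X_{t})}{vv^{\top}-X_{t}}\le\inprod{\nabla f(X_{t})}{\xsol-X_{t}}\le-h_{t}$, hence $h_{t+1}\le(1-\alpha)h_{t}+2\beta\alpha^{2}$. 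Taking $\alpha=2/(t+2)$ and inducting yields $h_{t}\le8\beta/t$. This step is routine.

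\emph{Setting up the local regime for \eqref{eq:secondparthm3}.} By the bound just proved, $t\ge T_{0}$ forces $h_{t}\le8\beta/T_{0}=\gamma\lambda_{n-\rsol}^{2}(\zsol)/(9\beta^{2})$; with quadratic growth this gives $\fronorm{X_{t}-\xsol}^{2}\le h_{t}/\gamma$ and hence $\opnorm{\nabla f(X_{t})-\nabla f(\xsol)}\le\beta\fronorm{X_{t}-\xsol}\le\tfrac13\lambda_{n-\rsol}(\zsol)$. Write $\delta:=\lambda_{n-\rsol}(\zsol)$. Strict complementarity gives $\nabla f(\xsol)=\zsol+\ssol I$ with $\zsol\succeq0$ of rank $\dm-\rsol$ and smallest nonzero eigenvalue $\delta$, so $\nabla f(\xsol)$ has its $\rsol=\dualdim$ smallest eigenvalues all equal to $\ssol$, separated by a gap $\delta$ from the rest, and moreover $\range(\xsol)=\nullspace(\zsol)$. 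Weyl's inequality then gives $\lambda_{\dm-k}(\nabla f(X_{t}))-\lambda_{\dm}(\nabla f(X_{t}))\ge\tfrac13\delta>0$ (using $k\ge\dualdim$), and a Davis--Kahan estimate for the $\rsol$-dimensional spaces $\nullspace(\zsol)$ and $\range(V)$ shows that $\range(V)$ nearly contains $\range(\xsol)$: writing $\Pi$ for the projector onto $\nullspace(\zsol)$ and $\Pi_{t}=VV^{\top}$, one gets $\opnorm{(I-\Pi_{t})\Pi}=:\sin\theta\le\tfrac{3}{2\delta}\opnorm{\nabla f(X_{t})-\nabla f(\xsol)}$, so $\sin^{2}\theta\lesssim\beta^{2}h_{t}/(\gamma\delta^{2})$.

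\emph{Per-step decrease.} I would then bound $h_{t+1}$ by the value of $f$ along two segments contained in the subproblem's feasible set $\mathcal{G}=\operatorname{conv}\!\big(\{X_{t}\}\cup\{VSV^{\top}:S\succeq0,\ \tr S=1\}\big)$. Along the corrective segment $X_{t}\to\hat\xsol$, where $\hat\xsol:=\Pi_{t}\xsol\Pi_{t}/\tr(\Pi_{t}\xsol)\in\mathcal{G}$: one checks $\nucnorm{\hat\xsol-\xsol}\lesssim\sin\theta$ and $\inprod{\nabla f(\xsol)}{\hat\xsol-\xsol}=\inprod{\zsol}{\hat\xsol}\lesssim\opnorm{\zsol}\sin^{2}\theta$, and the descent lemma converts $\inprod{\nabla f(X_{t})}{X_{t}-\hat\xsol}\ge h_{t}-\mathrm{err}$ into progress of order $(h_{t}-\mathrm{err})^{2}/(\beta\fronorm{X_{t}-\hat\xsol}^{2})$; since $\fronorm{X_{t}-\hat\xsol}^{2}\lesssim h_{t}/\gamma+\sin^{2}\theta$ this produces the $\gamma/\beta$-type rate. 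Along the classical Frank--Wolfe segment $X_{t}\to vv^{\top}$: using $\zsol\succeq\delta(I-\Pi)$ and gradient monotonicity one gets a Frank--Wolfe gap $\inprod{\nabla f(X_{t})}{X_{t}-vv^{\top}}\ge\delta\,\tr\big((I-\Pi)X_{t}\big)$ measuring the ``off-face mass'' of $X_{t}$, which the descent lemma turns into the $\delta/\beta$-type rate. Combining the two — according to whether $X_{t}$ is close to or far from the minimal face $\{X\succeq0:\tr X=1,\ \range(X)\subseteq\range(\xsol)\}$ — one obtains $h_{t+1}\le\big(1-\min\{\gamma/(4\beta),\delta/(12\beta)\}\big)h_{t}$ for $t\ge T_{0}$.

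\emph{Main obstacle.} The routine part is \eqref{eq:firstpartofthm3}; the crux is the local analysis, where two things must be controlled carefully. First, the error terms created by replacing the ideal subspace $\nullspace(\zsol)$ by the computed subspace $\range(V)$ — these scale like $\sin^{2}\theta\lesssim\beta^{2}h_{t}/(\gamma\delta^{2})$ and must be shown negligible against the current suboptimality $h_{t}$, which is precisely what the burn-in threshold $T_{0}=72\beta^{3}/(\gamma\lambda_{n-\rsol}^{2}(\zsol))$ is designed to buy. Second, the interplay between the quadratic-growth mechanism and the eigengap (Frank--Wolfe) mechanism: the solution need not be rank one, so neither segment alone suffices in general, and it is the careful balancing of the two that pins down the constant and yields the minimum of the two rates.
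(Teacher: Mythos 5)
Your argument for \eqref{eq:firstpartofthm3} is correct and is exactly the paper's: the classical Frank--Wolfe step $(1-\alpha)X_t+\alpha vv^\top$ is feasible for the subproblem \eqref{eq: smallksubproblem}, so the standard $\mathcal{O}(1/t)$ analysis carries over. The setup of the local regime (burn-in via quadratic growth, Weyl's inequality to transfer the eigengap from $\nabla f(\xsol)$ to $\nabla f(X_t)$) also matches the paper.

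The linear-rate step, however, has a genuine gap. You replace $\xsol$ by its normalized compression $\hat\xsol=\Pi_t\xsol\Pi_t/\tr(\Pi_t\xsol)$ onto the computed subspace and claim the resulting errors are ``negligible against $h_t$'' because of the burn-in. They are not. The two error terms in $\inprod{\nabla f(X_t)}{\xsol-\hat\xsol}$ are $\inprod{\zsol}{\hat\xsol}\lesssim\opnorm{\zsol}\sin^2\theta$ and the cross term $\inprod{\nabla f(X_t)-\nabla f(\xsol)}{\xsol-\hat\xsol}\lesssim\beta\fronorm{X_t-\xsol}\cdot\sin\theta$. Substituting your own estimates $\sin\theta\lesssim(\beta/\delta)\fronorm{X_t-\xsol}$ and $\fronorm{X_t-\xsol}^2\le h_t/\gamma$, both terms are of order $\bigl(\opnorm{\zsol}\beta^2/(\gamma\delta^2)\bigr)h_t$ and $\bigl(\beta^2/(\gamma\delta)\bigr)h_t$ respectively: they are \emph{proportional to} $h_t$, with prefactors that are typically much larger than $1$ (one always has $\gamma\lesssim\beta$, and nothing prevents $\delta\ll\beta$). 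The burn-in $T_0$ only forces $\sin^2\theta$ below an absolute constant (about $1/9$); it does not shrink the ratio $\mathrm{err}/h_t$, because $h_t$ appears on both sides. So the Frank--Wolfe gap along your corrective segment cannot be lower-bounded by a fixed fraction of $h_t$, and the $\gamma/\beta$-type rate does not follow. The same problem is not cured by your case split on the distance of $X_t$ to the minimal face, since $\sin\theta$ measures the misalignment of the computed eigenspace, not that distance.

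The paper avoids this entirely by never comparing $\range(V)$ to $\nullspace(\zsol)$. Its key ingredient is Lemma \ref{lem:Let--and}: for $Y=\nabla f(X_t)$, whose eigengap at position $n-\rsol$ is at least $\tfrac13\lambda_{n-\rsol}(\zsol)$ by Weyl, and for $X=\xsol$, there exists $W$ in the spectral face $\face{\rsol}(\nabla f(X_t))$ (hence feasible for the subproblem, since $k\ge\rsol$) with
\begin{equation*}
\inprod{\xsol-W}{\nabla f(X_t)}\ \ge\ \frac{\lambda_{n-\rsol}(\zsol)}{6}\,\fronorm{\xsol-W}^{2}.
\end{equation*}
This single inequality does the work of both of your segments: it supplies a point in the computed face whose linearized value is \emph{below} that of $\xsol$ by a margin quadratic in $\fronorm{\xsol-W}$, so the would-be error term appears with a helpful negative sign and exactly cancels the $2\fronorm{\xsol-W}^2$ contribution in the curvature bound $\fronorm{X_t-W}^2\le\tfrac{2}{\gamma}h_t+2\fronorm{\xsol-W}^2$. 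A single quadratic in the step size $\xi$ then yields $h_{t+1}\le(1-\xi+\tfrac{\xi^2\beta}{\gamma})h_t+(\xi^2\beta-\tfrac{\xi\lambda_{n-\rsol}(\zsol)}{6})\fronorm{\xsol-W}^2$, and optimizing $\xi$ gives the stated minimum of the two rates. To repair your proof you would need either this lemma or an analogous statement showing that your $\hat\xsol$ (or some other point of the computed face near $\xsol$) has linearized value at most that of $\xsol$ --- which your projection-based choice does not guarantee.
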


\paragraph{Discussion on the assumptions.} As discussed before, these assumptions 
are expected to be necessary for linear convergence and robustness of the rank under small perturbations. The assumption of the unique optimal solution is only for the purpose of clear presentation. 

\paragraph{Preparation of the proof.} Let us first give the definition of the $r$-th spectral
set. 
\begin{defn}
	For each $X\in\symMat^{\dm},$ let $V_{X}\in\real^{\dm\times k}$
	having orthonormal eigenvectors as columns corresponding to the smallest
	$k$ eigenvalues of $X$. Define the spectral $k$-th set $\face k(X)$
	of $X$ as 
	\begin{align*}
	\face k(X) :=
	&\left\{ V_{X}SV_{X}^{\top}\in\symMat^{\dm}\mid S\in\mathcal{S}_k\right\} .
	\end{align*}
\end{defn}
We next present the following important lemma which is proved
in Section \ref{sec: AppendixSpectralFrank-WolfeAndItsConvergenceRate} in the Appendix.
\begin{lem}
	\label{lem:Let--and}Given $Y\in\symMat^{\dm}$ which satisfies $\lambda_{\dm-r}(Y)-\lambda_{\dm-r+1}(Y)\geq\delta$
	for some $\delta>0$, then for any $X\in\symMat^{\dm}$, $X\succeq0$,
	and $\tr(X)=1$, there is some $W\in\face r(Y)$ such that 
	\[
	\inprod{X-W}Y\geq\frac{\delta}{2}\fronorm{X-W}^{2}.
	\]
\end{lem}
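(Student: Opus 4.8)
The plan is to decompose both $X$ and the target $W$ along the eigenspaces of $Y$ and reduce the claimed inequality to a comparison of the ``large'' versus ``small'' eigenvalue blocks. Write the eigendecomposition $Y = U\diag(\mu)U^\top$ with $\mu_1\geq\cdots\geq\mu_{\dm}$, and split $U = [U_1\ U_2]$ where $U_2\in\real^{\dm\times r}$ carries the smallest $r$ eigenvalues (so $\range(U_2)=\EV_r(Y)$ and $\face r(Y)=\{U_2 S U_2^\top : S\in\mathcal{S}_r\}$, after identifying $V_Y$ with $U_2$ up to rotation). Given the feasible $X\succeq 0$, $\tr(X)=1$, the natural candidate is $W := U_2 (U_2^\top X U_2) U_2^\top$, i.e. the compression of $X$ to the bottom eigenspace, re-embedded. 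One must first check $W\in\face r(Y)$: indeed $S:=U_2^\top X U_2\succeq 0$ and $\tr(S) = \inprod{U_2 U_2^\top}{X}\leq \tr(X)=1$, so $S\in\mathcal{S}_r$ is feasible.

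Next I would compute $\inprod{X-W}{Y}$ explicitly in the $U$-basis. Writing $\tilde X = U^\top X U$ in block form $\begin{psmallmatrix}\tilde X_{11} & \tilde X_{12}\\ \tilde X_{12}^\top & \tilde X_{22}\end{psmallmatrix}$, we have $W$ corresponding to $\begin{psmallmatrix}0 & 0\\ 0 & \tilde X_{22}\end{psmallmatrix}$, so $X-W$ has the off-diagonal blocks of $\tilde X$ together with the block $\tilde X_{11}$. Then $\inprod{X-W}{Y} = \inprod{\tilde X_{11}}{\diag(\mu_{1:\dm-r})} = \sum_{i=1}^{\dm-r}\mu_i (\tilde X_{11})_{ii} \geq \lambda_{\dm-r}(Y)\,\tr(\tilde X_{11})$, using $\tilde X_{11}\succeq 0$. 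Simultaneously, $\fronorm{X-W}^2 = \fronorm{\tilde X_{11}}^2 + 2\fronorm{\tilde X_{12}}^2$. The target inequality $\inprod{X-W}{Y}\geq \frac{\delta}{2}\fronorm{X-W}^2$ will therefore follow once I bound $\fronorm{\tilde X_{11}}^2 \leq \tr(\tilde X_{11})$ (true since $\tilde X_{11}\succeq 0$ with eigenvalues in $[0,1]$, as $\tilde X_{11}$ is a principal submatrix of a trace-$1$ PSD matrix, hence $\opnorm{\tilde X_{11}}\leq 1$) and control the cross term $\fronorm{\tilde X_{12}}^2$ by $\tr(\tilde X_{11})$ as well: since $X\succeq 0$, the Schur-complement/PSD constraint gives $\tilde X_{12}\tilde X_{12}^\top \preceq \opnorm{\tilde X_{22}}\,\tilde X_{11}\preceq \tilde X_{11}$, hence $\fronorm{\tilde X_{12}}^2 = \tr(\tilde X_{12}\tilde X_{12}^\top)\leq \tr(\tilde X_{11})$. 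Combining, $\fronorm{X-W}^2 \leq 3\tr(\tilde X_{11})$, and we still need the eigenvalue on the right-hand side of $\inprod{X-W}{Y}$ to dominate; here the gap assumption enters, since only $\lambda_{\dm-r}(Y)\geq \lambda_{\dm-r+1}(Y)+\delta$ is available, not positivity of $\lambda_{\dm-r}(Y)$ itself.

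To fix the constant correctly I would instead subtract the scalar $\lambda_{\dm-r+1}(Y)$: replace $Y$ by $Y - \lambda_{\dm-r+1}(Y)I$, which shifts $\inprod{X-W}{Y}$ by $\lambda_{\dm-r+1}(Y)\,\inprod{X-W}{I} = \lambda_{\dm-r+1}(Y)(\tr X - \tr W)$ — this is $\geq 0$ since $\tr W = \tr S\leq 1 = \tr X$ precisely when $\lambda_{\dm-r+1}(Y)\geq 0$, which need not hold. The clean route, which I expect to be the main technical point, is to choose the candidate $W$ more carefully so that $\tr W = 1$ exactly (rescale, or absorb the deficiency $1-\tr S$ into the $\tilde X_{22}$ block along its top eigenvector of $Y$-block, which only helps the inner product), making $X - W$ traceless; then the shift by any multiple of $I$ is free, so WLOG $\lambda_{\dm-r+1}(Y)=0$ and $\lambda_{\dm-r}(Y)\geq\delta$, and the estimate $\inprod{X-W}{Y}\geq \delta\,\tr(\tilde X_{11})\geq \tfrac{\delta}{3}\fronorm{X-W}^2$ closes with room to spare (the stated $\tfrac{\delta}{2}$ then needs the sharper bookkeeping $\fronorm{X-W}^2\le 2\tr(\tilde X_{11})$, obtained by noting $\fronorm{\tilde X_{11}}^2\le\tr(\tilde X_{11})$ and treating the cross term against the \emph{negative} part contributed by $\tilde X_{22}$ after the shift). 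The delicate step is thus the traceless reduction together with getting the numerical constant down to $\delta/2$; everything else is block-matrix algebra and the PSD inequalities above.
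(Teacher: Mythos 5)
Your overall strategy---work in the eigenbasis of $Y$, take $W$ to be the compression $U_2(U_2^\top XU_2)U_2^\top$ of $X$ onto the bottom-$r$ eigenspace plus a rank-one correction restoring the trace, and use the gap between $\lambda_{\dm-r}(Y)$ and $\lambda_{\dm-r+1}(Y)$---is the right one, but as written it does not deliver the lemma. Two repairs are needed. First, your opening claim that $S=U_2^\top XU_2$ is ``feasible'' because $\tr(S)\le 1$ is incorrect: $\mathcal{S}_r$ is the spectrahedron $\{\tr(S)=1,\ S\succeq0\}$, so the rank-one correction $S=\tilde X_{22}+\tr(\tilde X_{11})\,ee^\top$ (with $e$ the unit eigenvector of $U_2^\top YU_2$ for its largest eigenvalue $\lambda_{\dm-r+1}(Y)$) is not an optional refinement but mandatory for $W\in\face r(Y)$; it is also exactly what makes the inner-product bound work, since then $\inprod{X-W}{Y}\ge \lambda_{\dm-r}(Y)\tr(\tilde X_{11})-\lambda_{\dm-r+1}(Y)\tr(\tilde X_{11})\ge\delta\,\tr(\tilde X_{11})$ directly, with no spectral shift required. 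Second, and more substantively, your bookkeeping does not reach the stated constant $\delta/2$: bounding $\fronorm{\tilde X_{11}}^2\le\tr(\tilde X_{11})$ and $\fronorm{\tilde X_{12}}^2\le\tr(\tilde X_{11})$ separately and adding the correction term $\fronorm{\tr(\tilde X_{11})ee^\top}^2=(\tr\tilde X_{11})^2\le\tr(\tilde X_{11})$ gives only $\fronorm{X-W}^2\le4\tr(\tilde X_{11})$, i.e.\ $\delta/4$; and the mechanism you propose for tightening (``treating the cross term against the negative part contributed by $\tilde X_{22}$ after the shift'') has nothing to trade against, because with the corrected $W$ the bottom-block contribution to $\inprod{X-W}{Y}$ is exactly $-\lambda_{\dm-r+1}(Y)\tr(\tilde X_{11})$, which is already fully consumed by the gap.

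The correct sharpening keeps the traces as products rather than rounding each factor up to one: $\fronorm{\tilde X_{11}}^2\le(\tr\tilde X_{11})^2$ since $\tilde X_{11}\succeq0$, and the PSD block inequality you already invoke gives $\fronorm{\tilde X_{12}}^2\le\opnorm{\tilde X_{22}}\tr(\tilde X_{11})\le\tr(\tilde X_{22})\tr(\tilde X_{11})$. Hence
\[
\fronorm{X-W}^2\le(\tr\tilde X_{11})^2+2\tr(\tilde X_{11})\tr(\tilde X_{22})+(\tr\tilde X_{11})^2
=2\tr(\tilde X_{11})\bigl(\tr\tilde X_{11}+\tr\tilde X_{22}\bigr)=2\tr(\tilde X_{11}),
\]
which combined with $\inprod{X-W}{Y}\ge\delta\,\tr(\tilde X_{11})$ yields exactly the claimed $\delta/2$. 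With these two corrections your argument is complete and follows the same decomposition-plus-trace-correction route as the paper's proof; the only genuinely missing ingredient was the product-form bookkeeping above.
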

We are now ready to start the proof.
\begin{proof}[Proof of Theorem \ref{thm:MainAlgorithmTheorem}] Using the Lipschitz
	smoothness of $f$, we have for any $t\geq 1$,
	$\eta\in[0,1]$, and any $W\in\face{\rsol}(\nabla f(X_{t}))$:
	\begin{equation} \label{eq: QuadraticApproximationDueTosmoothness}
	\begin{aligned}
	f(X_{t+1})  \leq &f(X_{t})+(1-\eta)\inprod{W-X_{t}}{\nabla f(X_{t})}
	\\ &  +\frac{(1-\eta)^{2}\beta}{2}\fronorm{W-X_{t}}^{2}.
	\end{aligned}
	\end{equation}
	Now choose $W=v_{n}v_{n}^{\top}$where $v_{n}$ is the eigenvector
	of $\nabla f(X_{t})$ with the smallest eigenvalue, we can then perform
	the analysis as normal Frank-Wolfe as is done in \cite{jaggi2013revisiting}
	to reach the first part  of the theorem, the inequality  (\ref{eq:firstpartofthm3}). 
	
	For the second part, we first note that by the discussion after the
	Definition \ref{def:(Strict-Complementarity)-Suppose} of strict
	complementarity, we have $\lambda_{n-\rsol}\left(\nabla f(\xsol)\right)-\lambda_{n-\rsol+1}(\nabla f(\xsol))=\lambda_{n-\rsol}(\zsol)$,
	and $\lambda_{n-\rsol+1}(\nabla f(\xsol))=\dots=\lambda_{n}(\nabla f(\xsol))$. 
	
	Using Lipschitz continuous gradient of $f$ in step $(a)$, the quadratic growth of $f$ in step $(b)$, and
	the choice of $T_0$ in step $(c)$,  we find that for all $t\ge T_{0}$, 
	\begin{align}
	\fronorm{\nabla f(X_{t})-\nabla f(\xsol)} &\overset{(a)}{\leq}\beta\fronorm{X_{t}-\xsol \nonumber}\\
&\overset{(b)}{\leq}\beta \left(\frac{f(X_{t})-f(\xsol)}{\gamma}\right)^{\frac{1}{2}} \nonumber\\
	&\overset{(c)}{\leq}\frac{1}{3}\lambda_{n-\rsol}(\zsol). \label{eq: smoothnessgradientnearoptimalgradient}
	\end{align}
	
	Using the inequality \eqref{eq: smoothnessgradientnearoptimalgradient} and Weyl's inequality, we find that 
	\begin{equation} \label{eq:gapdifference}\
	\begin{aligned}
	&\lambda_{n-\rsol}(\nabla f(X_{t}))-\lambda_{n-\rsol+1}(\nabla f(X_t)) \\
	=& \underbrace{\lambda_{n-\rsol}\left(\nabla f(\xsol)\right)-\lambda_{n-\rsol+1}(\nabla f(\xsol))}_{=\lambda_{n-\rsol}(\zsol)}\\
	& +\underbrace{\left(\lambda_{n-\rsol}(\nabla f(X_{t}))-\lambda_{n-\rsol}\left(\nabla f(\xsol)\right)\right)}_{\geq-\frac{1}{3}\lambda_{\dm-\rsol}(\zsol)}\nonumber \\
	& +\underbrace{\left(\lambda_{n-\rsol+1}\left(\nabla f(\xsol)\right)-\lambda_{n-\rsol+1}(\nabla f(X_{t}))\right)}_{\geq-\frac{1}{3}\lambda_{n-\rsol}(\zsol)}\\
	& \geq\frac{1}{3}\lambda_{n-\rsol}(\zsol).\nonumber 
	\end{aligned}
	\end{equation}
	Now we subtract the inequality \eqref{eq: QuadraticApproximationDueTosmoothness} both sides by $f(\xsol)$, and denote
	$h_{t}=f(X_{t})-f(\xsol)$ for each $t$, we reach 
	\begin{equation}\label{eq:htht+1intermediatequantity}
	\begin{aligned} 
	h_{t+1}\leq & h_{t}+(1-\eta)\underbrace{\inprod{W-X_{t}}{\nabla f(X_{t})}}_{R_1}\\
	            & +\frac{(1-\eta)^{2}\beta}{2}\underbrace{\fronorm{W-X_{t}}}_{R_2}.
    \end{aligned} 
	\end{equation}
	Using Lemma \ref{lem:Let--and} and the inequality \eqref{eq:gapdifference},
	we can choose $W\in\face{\rsol}(\nabla f(X_{t}))$ such that 
	\begin{align}
	\inprod{W-\xsol}{\nabla f(X_{t})} & \leq-\frac{\lambda_{n-\rsol}(\zsol)}{6}\fronorm{\xsol-W}^{2}.\label{eq:intermediateinequality2forlinearconvergence}
	\end{align}
	Let us now analyze the term $R_1=\inprod{W-X_{t}}{\nabla f(X_{t})}$
	using (\ref{eq:intermediateinequality2forlinearconvergence}) and convexity of $f$:
	\begin{equation*}
	\begin{aligned} \label{eq:inprodw-xtalgorithmtheorem}
	 R_1=   & \inprod{W-X_{t}}{\nabla f(X_{t})}\\
	  = & \inprod{W-\xsol}{\nabla f(X_{t})}+\inprod{\xsol-X_{t}}{\nabla f(X_{t})}\nonumber \\
	\leq&-\frac{\lambda_{n-\rsol}(\zsol)}{6}\fronorm{\xsol-W}^{2}-h_{t}.
	\end{aligned}
    \end{equation*}
	The term $R_2=\fronorm{X_{t}-W}^{2}$ can be bounded by 
	\begin{equation*}
	\begin{aligned} 
	R_2 = \fronorm{X_{t}-W}^{2} &\overset{(a)}{\leq}2\left(\fronorm{X_{t}-\xsol}^{2}+\fronorm{\xsol-W}^{2}\right)\\
	&\overset{(b)}{\leq}\frac{2}{\gamma}h_{t}+2\fronorm{\xsol-W}^{2},\label{eq:xt-walgorithminequality}
	\end{aligned} 
	\end{equation*}
	where we use triangle inequality and the basic inequality $(a+b)^{2}\leq2a^{2}+2b^{2}$
	in step $(a)$, and the quadratic growth condition in step $(b)$. 
	
	Now combining (\ref{eq:htht+1intermediatequantity}), and the bounds of $R_1$ and $R_2$, we reach that there is a
	$W\in\face{\rsol}(\nabla f(X_{t}))$ such that for any $\xi=1-\eta\in[0,1]$, we have 
	\begin{align*}
	h_{t+1} 
	\leq & h_{t}+\xi\left(-\frac{\lambda_{n-\rsol}(\zsol)}{6}\fronorm{\xsol-W}^{2}-h_{t}\right)\\
	+&\frac{\xi^{2}\beta}{2}\left(\frac{2}{\gamma}h_{t}+2\fronorm{\xsol-W}^{2}\right)\\
   =&\left(1-\xi+\frac{\xi^{2}\beta}{\gamma}\right)h_{t}\\
    +&\left(\xi^{2}\beta-\frac{\xi\lambda_{n-\rsol}(\zsol)}{6}\right)\fronorm{\xsol-W}^{2}.
	\end{align*}
	A detailed calculation and choice of $\xi$ in Section \ref{sec: AppendixSpectralFrank-WolfeAndItsConvergenceRate} in Appendix reveals that we can reach the second part of the theorem, the inequality \eqref{eq:secondparthm3}.
\end{proof}

\section{Quadratic Growth and Linear Convergence of Algorithms}\label{sec: QuadraticGrowthAndLinearConvergence}

In this section, we show that when $g$ is $\alpha$-strongly convex \cite{nesterov2013introductory}
and strict complementarity of (\ref{eq:Mainoptimization}) holds, then
we have quadratic growth of Problem (\ref{eq:Mainoptimization}).
We also demonstrate when the dual matrix $\zsol$ has rank $n-1$ then we do not require $g$ to be $\alpha$-strongly convex. An immediate consequence is 
the linear convergence of PGD and APGD \cite{karimi2016linear}, the generalized blockFW\footnote{We show its convergence 
under quadratic growth in Lemma \ref{lem: linearConvergenceOfBlockFW} in Section \ref{sec: lemmaForSection4} in the Appendix.} (Algorithm~\ref{alg:generalizedBlockFW}, and
the spectral Frank-Wolfe (Algorithm~\ref{alg:spectral_frank_wolfe}) as shown in Theorem \ref{thm:MainAlgorithmTheorem}.  
\begin{thm}
	\label{thm:MainStructralTheorem}Suppose strict complementarity of
	(\ref{eq:Mainoptimization}) and one of the following conditions
	hold:
	
	(i) $g$ is $\alpha$-strongly convex, and the solution $\xsol$ is
	unique, or
	
	(ii) the dual matrix $\zsol$ in the KKT condition (\ref{eq:KKT})
	has rank $n-1$,
	
	then Problem~(\ref{eq:Mainoptimization}) satisfies quadratic
	growth. The constant $\gamma$ takes the form of 
	
	(i) $\gamma=\min\left\{ \frac{\lambda_{n-\rsol}(\zsol)}{4+8\frac{\sigma_{\max}^{2}(\tilde{\Amap})}{\sigma_{\min}^{2}(\tilde{\Amap}_{V})}},\frac{\alpha\sigma_{\min}^{2}(\tilde{\Amap}_{V})}{8}\right\} $
	in the first case, where $\tilde{\Amap}(X)=\begin{bmatrix}\tr(X)\\
	\Amap(X)
	\end{bmatrix}$, and 
	
	(ii) $\gamma=\frac{\lambda_{n-\rsol}(\zsol)}{2}$ in the second case. In addition, the uniqueness of $\xsol$ is  implied in the second case. 
\end{thm}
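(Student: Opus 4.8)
The plan is to reduce the quadratic growth inequality $f(X)-f(\xsol)\geq \gamma\fronorm{X-\xsol}^2$ to two complementary pieces: a "tangential" piece controlled by strong convexity of $g$ composed with $\Amap$, and a "normal" piece controlled by the eigenvalue gap $\lambda_{n-\rsol}(\zsol)$ of the dual certificate. First I would split the displacement $D=X-\xsol$ according to the range of $\xsol$: write $D = D_{\mathrm{t}} + D_{\mathrm{n}}$, where $D_{\mathrm{t}}$ lives in the "tangent" directions (those whose support overlaps $\range(\xsol)$, equivalently directions killed by $\zsol$) and $D_{\mathrm{n}}$ captures the component pushing mass into $\nullspace(\zsol)^\perp = \range(\zsol)$. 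Using the first-order KKT condition $\nabla f(\xsol) = \zsol + \ssol I$, feasibility $\tr(X)=\tr(\xsol)=1$, and $\inprod{\zsol}{\xsol}=0$, the first-order term in the Taylor expansion of $f$ around $\xsol$ becomes $\inprod{\nabla f(\xsol)}{D} = \inprod{\zsol}{X}\geq 0$, and in fact $\inprod{\zsol}{X} = \inprod{\zsol}{D_{\mathrm{n}}}$ is, up to the eigengap, comparable to $\fronorm{D_{\mathrm{n}}}$ or $\fronorm{D_{\mathrm{n}}}^2$ (since $X\succeq0$ and $X$'s mass in $\range(\zsol)$ is small near $\xsol$). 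This is exactly the mechanism already used in Lemma \ref{lem:Let--and} and in the proof of Theorem \ref{thm:MainAlgorithmTheorem}; I expect to reuse that estimate essentially verbatim to get $f(X)-f(\xsol)\geq \inprod{\zsol}{X}\gtrsim \lambda_{n-\rsol}(\zsol)\fronorm{D_{\mathrm{n}}}^2$.

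For the tangential part, the key observation is that on the face of $\specsplex$ exposed by $\zsol$ — i.e., matrices of the form $\dspacerep S\dspacerep^\top$ — the objective restricted there is $g(\tilde\Amap_V(S)) + \text{const}$ (after absorbing the trace constraint into $\tilde\Amap$), and $g$ is $\alpha$-strongly convex. Strong convexity of $g$ plus the lower singular value bound $\sigma_{\min}(\tilde\Amap_V)$ on the restricted map gives $\alpha$-type quadratic growth in the tangent directions: $f(X_{\mathrm{t-proj}}) - f(\xsol)\gtrsim \alpha\,\sigma_{\min}^2(\tilde\Amap_V)\fronorm{D_{\mathrm{t}}}^2$, where $X_{\mathrm{t-proj}}$ is the projection of $X$ onto that face. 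The next step is to glue the two: using convexity/smoothness to compare $f(X)$ with its value at the projected point, and a triangle-inequality argument $\fronorm{D}^2\lesssim \fronorm{D_{\mathrm{t}}}^2 + \fronorm{D_{\mathrm{n}}}^2$, one arrives at the stated $\gamma = \min\{\lambda_{n-\rsol}(\zsol)/(4+8\sigma_{\max}^2(\tilde\Amap)/\sigma_{\min}^2(\tilde\Amap_V)),\ \alpha\sigma_{\min}^2(\tilde\Amap_V)/8\}$; the appearance of $\sigma_{\max}(\tilde\Amap)$ comes precisely from bounding the cross term incurred when transferring the normal displacement through the linear map. The uniqueness of $\xsol$ is used here to guarantee $\sigma_{\min}(\tilde\Amap_V)>0$ (otherwise $g\circ\tilde\Amap_V$ could be flat along a direction, producing a second optimum).

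For case (ii), when $\rank(\zsol)=n-1$, strict complementarity forces $\rsol=1$, so the face is one-dimensional: $\xsol = v v^\top$ for a unit eigenvector $v$ of $\zsol$ with eigenvalue $0$, and the only feasible perturbation staying on the simplex that does not increase mass in $\range(\zsol)$ is trivial. Hence $D_{\mathrm{t}}=0$ essentially (any tangent move off $vv^\top$ immediately charges against $\zsol$), and the entire gap is captured by the normal term: $f(X)-f(\xsol)\geq \inprod{\zsol}{X}\geq \lambda_{n-\rsol}(\zsol)(1 - v^\top X v)\gtrsim \tfrac{1}{2}\lambda_{n-\rsol}(\zsol)\fronorm{X-vv^\top}^2$, using that for $X\in\specsplex$ one has $\fronorm{X-vv^\top}^2 \leq 2(1-v^\top X v)$. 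This yields $\gamma=\lambda_{n-\rsol}(\zsol)/2$ with no strong-convexity hypothesis, and uniqueness of $\xsol$ is automatic because any optimal solution must have range inside $\nullspace(\zsol)$, which is the line $\mathrm{span}(v)$.

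The main obstacle I anticipate is the gluing step in case (i): cleanly decomposing $D = X-\xsol$ so that the tangential estimate (which wants $D$ to be an honest element of the exposed face, i.e. of the form $\dspacerep S\dspacerep^\top$ with the simplex constraint) and the normal estimate can be summed without losing more than constant factors, and in particular correctly tracking how the $\sigma_{\max}(\tilde\Amap)/\sigma_{\min}(\tilde\Amap_V)$ ratio enters. Handling the fact that the projection of $X$ onto the face need not be feasible for the trace constraint until one also adjusts by a multiple of $\xsol$ — and bounding the resulting correction — is the delicate bookkeeping; everything else is a routine combination of strong convexity, Lipschitz smoothness, and Weyl/triangle inequalities already deployed earlier in the paper.
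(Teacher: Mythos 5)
Your proposal follows essentially the same route as the paper: both reduce quadratic growth to the lower bound $f(X)-f(\xsol)\geq\inprod{\zsol}{X}+\frac{\alpha}{2}\twonorm{\Amap(X-\xsol)}^{2}$ from convexity/strong convexity plus the KKT conditions, then convert the pair $(\inprod{\zsol}{X},\twonorm{\Amap(X-\xsol)})$ into a bound on $\fronorm{X-\xsol}^{2}$ via a tangential/normal split relative to $\nullspace(\zsol)$, with uniqueness guaranteeing $\sigma_{\min}(\tilde{\Amap}_{V})>0$; the ``delicate gluing'' you flag is exactly what the paper packages into Lemma \ref{lem:Suppose-the-following}, and your case (ii) argument matches the paper's self-contained appendix treatment.
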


\begin{proof}
	The second case has been verified in \citet[Lemmas 1 and 2]{garber2019convergence}. We 
	provide a self-contained and different proof in Section \ref{sec: lemmaForSection4} in 
	Appendix. 
	
	Now consider the first case. For any feasible $X$ and the optimal
	solution $\xsol,$ we have 
	\begin{equation} \label{eq: optimalitygStrongconvexityInequality}
	\begin{aligned}
            	    &f(X)-f(\xsol)\\
                   =& g(\Amap X)-g(\Amap\xsol)+\inprod C{X-\xsol}\\
\overset{(a)}{\geq}	& \inprod{(\nabla g)(\Amap\xsol)}{\Amap(X-\xsol)}\\
               	   +&\inprod C{X-\xsol}+\frac{\alpha}{2}\twonorm{\Amap X-\Amap\xsol}^{2} \\
\overset{(b)}{=}	& \inprod{\Amap^{*}(\nabla g)(\Amap\xsol)+C}{X-\xsol}\\
	               +&\frac{\alpha}{2}\twonorm{\Amap X-\Amap\xsol}^{2} \\
	\overset{(c)}{=}& \inprod{\zsol+\ssol I}{X-\xsol}+\frac{\alpha}{2}\twonorm{\Amap(X-\xsol)}^{2} \\
\overset{(d)}{=}	& \inprod{\zsol}X+\frac{\alpha}{2}\twonorm{\Amap(X-\xsol)}^{2}\overset{(e)}{\geq}0
	\end{aligned}
    \end{equation}
	Here step $(a)$ is due to the strong convexity of $g$. Step $(b)$
	is because of the definition of $\Amap^*$. For step $(c)$, we uses
	the first order condition of KKT condition (\ref{eq:KKT}) in terms
	of $g$ and $\Amap$: $\Amap^{*}(\nabla g)(\Amap\xsol)+C-\zsol-\ssol I=0.$
	The step $(d)$ is due to the complementarity in KKT condition (\ref{eq:KKT})
	and feasibility of $X$ and $\xsol.$ The last inequality $(e)$ is
	beacause $Z,X\succeq0.$
	
	We claim that a feasible matrix $X\in\symMat^{\dm}$ is optimal if and only
	if $X$ satisfies
	\begin{equation} \label{eq:linearmatrixinequalityoptimalcondition} 
	\begin{aligned}
	\inprod{\zsol}X  =0,\quad 
	\Amap X -\Amap\xsol=0,\\
	\tr(X)  =1,\quad\text{and}\quad 
	X  \succeq0.
	\end{aligned}
    \end{equation}  
	Indeed, if $X$ is optimal, then \eqref{eq: optimalitygStrongconvexityInequality}  
	and feasibility of $X$ implies \eqref{eq:linearmatrixinequalityoptimalcondition}. 
    Conversely, if $X$ satisfies \eqref{eq:linearmatrixinequalityoptimalcondition}, then it 
    satisfies the KKT condition 
    \eqref{eq:KKT} and hence it is optimal because the problem \eqref{eq:Mainoptimization} is convex. 
	Since the optimal solution is unique by assumption, we know the system
	(\ref{eq:linearmatrixinequalityoptimalcondition}) admits a unique
	solution. Using Lemma \ref{lem:Suppose-the-following} in Section \ref{sec: lemmaForSection4} in the Appendix, we have the
	relationship between $(\inprod{\zsol}X,\twonorm{\Amap(X-\xsol)})$
	and the distance to the solution $\fronorm{X-\xsol}$:
	\begin{equation}\label{eq:xxsolzxax-brelation}
	\begin{aligned} 
	\fronorm{X-\xsol}^{2}&\leq\left(4+8\frac{\sigma_{\max}^{2}(\tilde{\Amap})}{\sigma_{\min}^{2}(\tilde{\Amap}_{V})}\right)\frac{\inprod{\zsol}X}{\lambda_{n-\rsol}(\zsol)}\\
	&+\frac{4}{\sigma_{\min}^{2}(\tilde{\Amap}_{V})}\twonorm{\Amap(X)-b}^{2}.
	\end{aligned} 
	\end{equation}
	Combining  \eqref{eq: optimalitygStrongconvexityInequality} and (\ref{eq:xxsolzxax-brelation}),
	we see that 
	\[
	f(X)-f(\xsol)\geq\gamma\fronorm{X-\xsol}^{2}
	\]
	for $\gamma=\min\left\{ \frac{\lambda_{n-\rsol}(\zsol)}{4+8\frac{\sigma_{\max}^{2}(\tilde{\Amap})}{\sigma_{\min}^{2}(\tilde{\Amap_{V})}}},\frac{\alpha\sigma_{\min}^{2}(\tilde{\Amap}_{V})}{8}\right\} $.
\end{proof}

\section{Numerics}\label{sec: numerics}
In this section, we verify numerically a few of our claims in the paper, 
and show the advantages of the Spectral Frank-Wolfe algorithm when strict complementarity 
is satisfied and the solution rank is larger than $1$. We focus on the 
quadratic sensing problem \cite{chen2015exact}. Given a random matrix $U_{\natural}\in \real^{\dm\times r_\natural}$ with 
$r_\natural =3$ and Frobenius norm $\fronorm{U_\natural }^2 =1$,
we generate Gaussian vectors $a_i\in \real^{\dm\times 1},i=1,\dots,m$ and construct  
quadratic measurement vectors $y_0(i) = \fronorm{U_\natural ^\top a_i}^2,i=1,\dots,m$. We then add noise 
$\mathtt{n}=c \twonorm{y_0}v$, where $c$ is the inverse signal-to-noise ratio and $v$ is a random unit vector. Our 
observation is given by $y= y_0 + \mathtt{n}$ and we aim to recover $U_\natural U_\natural ^\top$ from $y$. To this end, we solve the following optimization problem:
\begin{equation}\label{eq: quadraticSensing}
\begin{aligned}
\mbox{minimize}\quad & f(X):=\frac{1}{2}\sum_{i=1}^m\left(a_i^\top Xa_i-y_i\right)^2 \\
\mbox{subject to\ensuremath{\quad}} & \tr(X)=\tau, \quad X\succeq 0.
\end{aligned}
\end{equation}
We set $m = 15n r_\natural$ in all our experiments. 

\paragraph{Low rankness and strict complementarity.} We verify the low rankness 
and strict complementarity for $n=100,200,400$ and $600$. We set $c=0.5$ for the noise Level. 
We also set $\tau =0.5$, since otherwise, the optimal solution will fit the noise and results in a higher rank matrix. 
Problem \eqref{eq: quadraticSensing} is solved via FASTA \cite{GoldsteinStuderBaraniuk:2014, FASTA:2014}. We found 
that every optimal solution rank in this case is $\rsol =3$, and there is indeed a significant gap between 
$\lambda_{n-3}(\nabla f(\xsol))$ and $\lambda_{n}(\nabla f(\xsol))$, which verifies strict 
complementarity. More details can be found in Table \ref{table: LowRankandStricComplementarity}. 
\begin{table} \label{table: LowRankandStricComplementarity}
\centering
\begin{tabular}{ccc} 
 	\toprule
 	Dimension $n$ & Avg. gap  & Avg. recovery error \\ 
 	\midrule
 	100 & 288.06  &0.0013 \\
 	200 & 505.16 & 0.00064 \\
 	400 & 961.09  & 0.00031\\
 	600 & 1358.62 & 0.00021\\
    \bottomrule 
\end{tabular}
\caption{Verification of low rankness and strict complementarity. The 
recovery error is measured by $\frac{\fronorm{\frac{\xsol}{\tau}-U_\natural U_\natural^\top}}{\fronorm{U_\natural U_\natural^\top}}$.
The gap is measured by $\lambda_{n-3}(\nabla f(\xsol))-\lambda_{n}(\nabla f(\xsol))$. All the results is averaged over $20$ iid trials.} 
\end{table}  
\paragraph{Comparison of algorithms.} We now compare the performance of 
FW, G-BlockFW, and SpecFW. We follow the setting as the previous paragraph for 
$n=100,200,400$, and $600$. 
We set $k=4$ 
for both SpecFW and G-BlockFW, which is larger than $\rsol=3$.
We also set $\eta =0.4$ and 
$\beta = 2.5n^2$.\footnote{This choice might 
appear conservative. But we note that $a_i$ has length around $\sqrt{n}$. Hence the operator norm 
of $\Amap$ is around $\sqrt{m}n$ ($\inprod{a_ia_i^\top}{X}\leq \fronorm{a_i}^2\fronorm{X}\approx n\fronorm{X}$), which suggests $L_f=\opnorm{\Amap}^2=n^2m$ as a safe choice. We have 
already omitted one $m$ factor here for better algorithmic performance.} 
The small-scale SDP \eqref{eq: smallksubproblem} is solved via FASTA. 
We plot the relative objective value against both the time and iteration counter in Figure~\ref{fig:riskCover}. We only present the plot for the case of $n=600$ here and those for the other cases  can be found in Section \ref{sec: additionalNumerics} in the Appendix. As can be seen from Figure \ref{fig:riskCover}(a), SpecFW 
converges faster in terms of both the iteration counter and the time. The oscillation in the end 
may be attributed to the sub-problem solver. 
\paragraph{Misspecification of $k$.} We adopt the same setting as before. In this 
experiment, we set $k=2$ for both SpecFW and G-BlockFW, which is less than $\rsol=3$. As can be seen from the Figure \ref{fig:riskCover}(b), 
SpecFW still converges as fast as FW (the two line coincide). G-BlockFW gets stuck around $10^{-1}$ and
stop converging to the optimal solution.

\begin{figure}[t!]
	\begin{center}
		\subfigure[$k> \rsol$]{\includegraphics[width=1\linewidth]{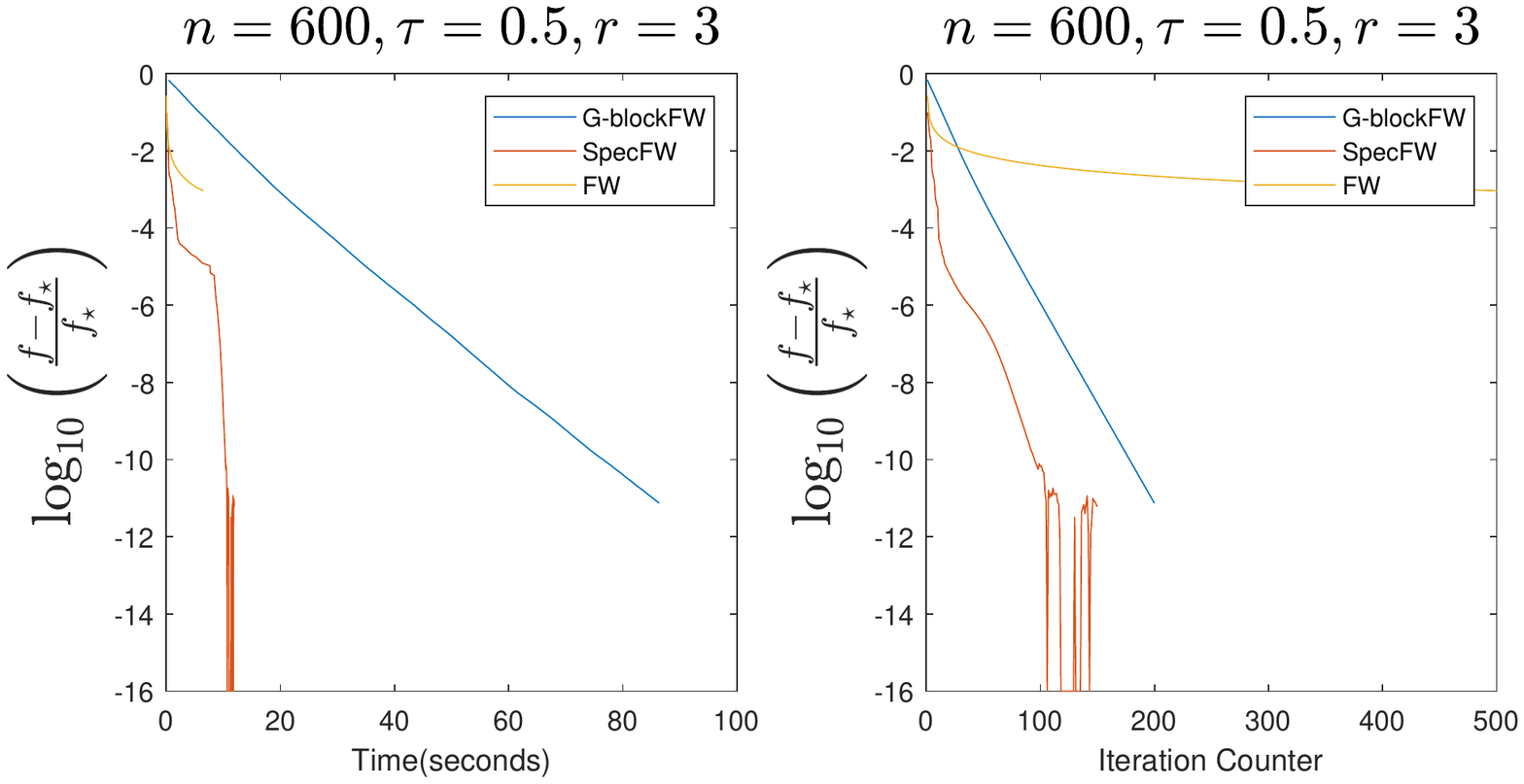}}
		\subfigure[$k< \rsol$]{\includegraphics[width=1\linewidth]{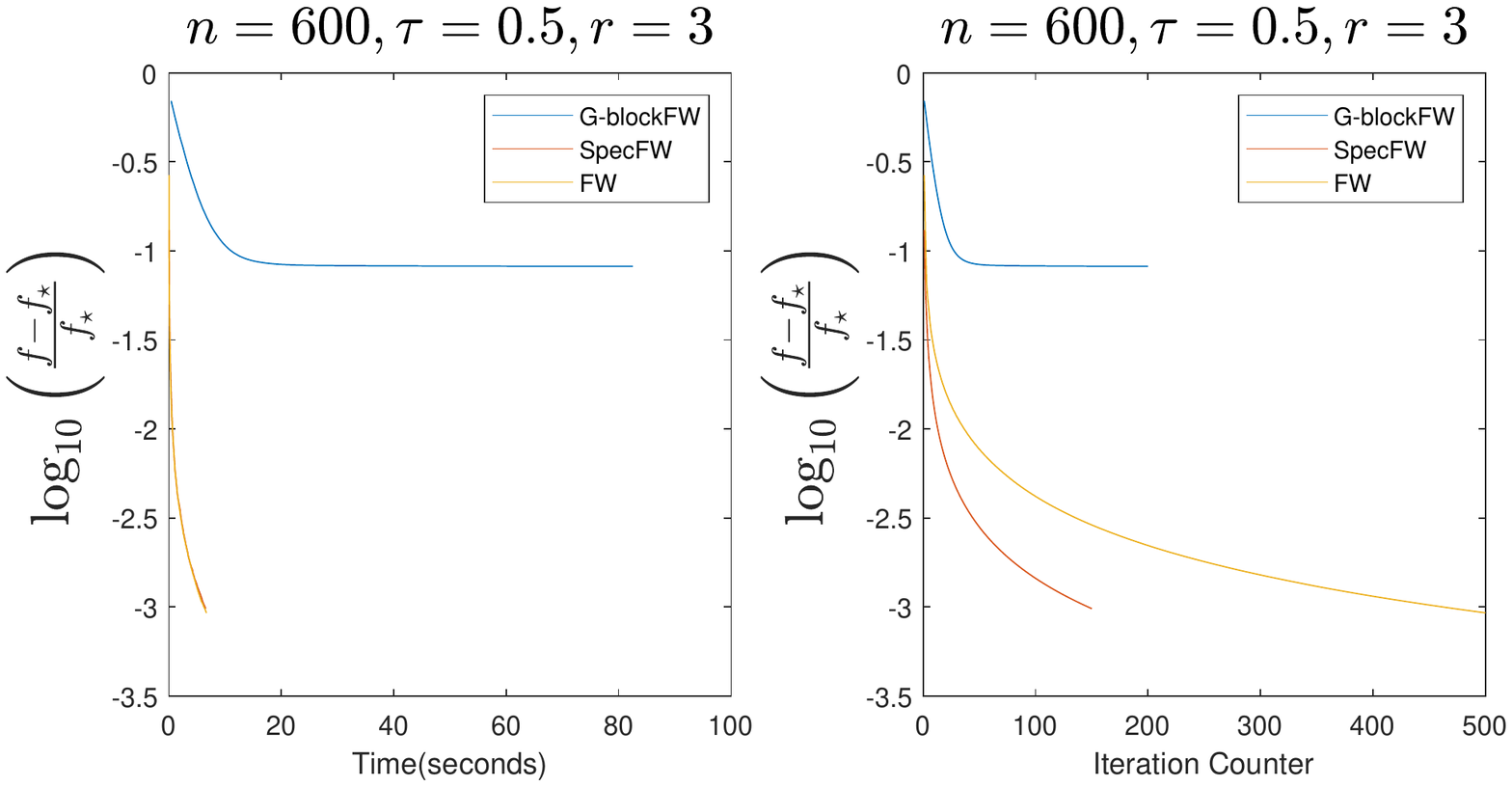} }
	\end{center}
	\caption{Comparison of algorithms under different setting.
	$f^\star$ is obtained from the best value of the three methods and FASTA.}
	\label{fig:riskCover}
\end{figure}
\section{Discussion}
In this paper, we propose the Spectral Frank-Wolfe algorithm, a novel variant of the classical Frank-Wolfe algorithm, which converges sublinearly for convex smooth optimization problems and converges linearly when strict complementary is satisfied for structural convex optimization problems. We also show that the quadratic growth condition, which is essential for linear convergence of first order methods, holds under strict complementarity. 

Here we discuss two potential (and hopefully interesting) extensions of the current paper:
\begin{itemize}
	\item \textbf{Total computational complexity:} The complexity of subproblem \eqref{eq: smallksubproblem} is not discussed and hence leave the total complexity unresolved. Simply using the known $\mathcal{O}(\frac{1}{\sqrt{\epsilon}})$ result of APG for the subproblem complexity seems to be too pessimistic. Is it possible to improve this complexity to $\mathcal{O}(\log(\frac{1}{\epsilon}))$? 
	\item \textbf{Solving Subproblem \eqref{eq: smallksubproblem} by sub-sampling?} In many applications, $f$ is of a finite sum structure with $m$ terms, e.g., matrix completion, and quadratic sensing. The number $m$ is usually on the order $\dm \rsol$. In the subproblem \eqref{eq: smallksubproblem}, the decision variable has size $\mathcal{O}(k^2)$, which can be much smaller than $m$. It might be 
	unwise to use all the $m$ terms. Can we sub-sample the $m$ terms to reduce the burden of computing gradient?
\end{itemize}
\section*{Acknowledgements}
L. Ding and Y. Fei were supported by the National Science Foundation CRII 
award 1657420 and grant 1704828.
C. Yang was supported in part by DARPA Award FA8750-17-2-
0101. We would like to thank Yudong Chen, Madeleine Udell, 
James Renegar, and Adrian Lewis for helpful discussions.
%
%


\bibliography{example_paper}
\bibliographystyle{icml2020}

\onecolumn
\icmltitle{Appendices to ``Spectral Frank-Wolfe Algorithm: Strict Complementarity and Linear Convergence"}

\appendix
\section{Uniqueness assumption}\label{sec: uniqueness}
Here we discuss how to adapt our results to multiple solution setting. 
First of all, if there are multiple solution, the strict complementarity condition 
means that there is a primal optimal solution $\xsol$ such that 
\[
\rank(\xsol)+\rank(\zsol)=\dm.
\]
Thus we should set $\rsol$ to be the maximal rank among all primal solutions. 
Denote the set of primal optimal solution of Problem \eqref{eq:Mainoptimization} as 
$\mathcal{X}_\star$. Quadratic growth in this situation is understood as 
\[
f(X)-f(\xsol)\geq \gamma \inf_{\xsol \in \mathcal{X}_\star}\fronorm{X-\xsol}=:\dist(X,\mathcal{X}_\star),
\]
for any $X\succeq 0$ and $\tr(X)=1$.
Now 
due to strict complementarity, we still have $\rsol =\dualdim$ (dual solution $\zsol$
is unique as shown in the next section). Theorem \ref{thm:MainAlgorithmTheorem} can 
be now be proved in the exactly same way by considering the 
nearest $\xsol \in \mathcal{X}_\star$  to $X_t$
without the uniqueness assumption. To prove Theorem \ref{thm:MainStructralTheorem}, 
the argument follows exactly as the main proof by considering the nearest $\xsol\in \mathcal{X}_\star$ to $X$, and replacing Lemma \ref{lem:Suppose-the-following} by Lemma \ref{lem: nonuniquesolutionQGLemma}. In this case, the parameter $\gamma$ of quadratic growth is $\gamma=\min\left\{ \frac{\lambda_{n-\rsol}(\zsol)}{4+8\frac{\sigma_{\max}^{2}(\tilde{\Amap})}{\mu^2}},\frac{\alpha\mu^2}{8}\right\}$ where $\mu:=\sup\{a \geq 0\mid a\cdot \dist(X,\mathcal{X}_\star)\leq 
\twonorm{\tilde{\Amap}(X)-b} \text{ for all } X\in \face{\rsol}(\zsol)\}$ and is indeed positive using Lemma \ref{lem: nonuniquesolutionQGLemma}.

\section{Lemmas for Section \ref{sec: StrictComplementarityAndGrowth}}

\begin{lem}\label{lem: uniquenessOfTheDual}
	The dual solution $(\zsol,\ssol)$ of Problem \eqref{eq:Mainoptimization} is unique even if 
	the primal solution is not unique. 
\end{lem}
\begin{proof}
	We first show that for any primal solution $\xsol$, its gradient 
	$\nabla f(\xsol)$ is the same. 
Using $\beta$-smoothness of $f$ (the constant $\beta$ can be taken to be $\opnorm{\Amap}^2L_g$),
we have for any optimal $\xsol$ and $\xsol'$
\begin{equation}
\begin{aligned} \label{eq: smoothness}
&\inprod{\xsol-\xsol'}{\nabla f(\xsol)-\nabla f(\xsol')}\\
\geq & \frac{1}{\beta}\fronorm{\nabla f(\xsol)-\nabla f(\xsol')}^2.
\end{aligned} 
\end{equation}
Since $\xsol$ and $\xsol'$ are optimal solution, we have the following 
two inequalities using the optimality 
\begin{align}
\inprod{\xsol -\xsol '}{\nabla f(\xsol)}&\leq 0, \label{eq: optimalityConditionForGradientNablafxsol}\\
\inprod{\xsol'-\xsol}{\nabla f(\xsol')}& \leq 0. \label{eq: optimalityConditionForGradientNablafxsol'}
\end{align}
Combining the inequalities \eqref{eq: smoothness}, \eqref{eq: optimalityConditionForGradientNablafxsol},
and \eqref{eq: optimalityConditionForGradientNablafxsol'}, we have 
\begin{equation}
\fronorm{\nabla f(\xsol)-\nabla f(\xsol')}\leq 0\implies f(\xsol)=f(\xsol').
\end{equation}
This shows that $\nabla f(\xsol)$ is unique. Now for any 
$\zsol,\ssol$ and $\zsol',\ssol'$ satisfying the KKT condition, 
we have
\begin{equation}
\begin{aligned}
\nabla f(\xsol)+C & = \zsol +\ssol I \\
 & =\zsol'+\ssol' I\\
 \implies &\zsol-\zsol'=(\ssol'-\ssol)I.  
\end{aligned}
\end{equation}
Now using complementarity in step $(a)$ and 
feasibility of $\xsol$ in step $(b)$:
\begin{equation}
\begin{aligned}\label{eq: contradictionzsolzsol'unqiue} 
0 &\overset{(a)}{=}\inprod{\zsol -\zsol'}{\xsol}
=(\ssol'-\ssol)\inprod{I}{\xsol}\\
&\overset{(b)}{=}(\ssol'-\ssol)\\
\implies &\ssol=\ssol', \quad \text{and}\quad \zsol=\zsol'.
\end{aligned} 
\end{equation}
Hence the dual solution $\zsol$ and $\ssol$ is unique. 
\end{proof}
\begin{lem}\label{lem: genericstrictcomplementarity}
For almost all $C$, the strict complementarity condition holds for 
\eqref{eq:Mainoptimization}.  
\end{lem}
\begin{proof}
Let us first define indicator function: for any given $D\subset \real^\dm$, we define 
\[
\chi_C(x)=\begin{cases}
0, & x\in D\\
+\infty ,& x\not\in D.
\end{cases}
\]
Also denote the relative interior of a set $D$ as $\relint(D)$.  
We utilize the result in \citet[Corollary 3.5]{drusvyatskiy2011generic}, that for almost all $C$, we have 
\begin{equation} 
\begin{aligned}
-C\in& \relint(\partial( g(\Amap X)\\
     & + \chi_{\{\tr(X)=1\}}(X)+\chi_{\{X\succeq 0\}} (X))(\xsol))\\
    \overset{(a)}{=}& \relint(\Amap^*(\nabla g)(\Amap \xsol)+\{sI\mid s\in \real\}\\
     &+\{-Z\mid Z\succeq 0,\range(Z)\subset \nullspace(\xsol)\})\\
    \overset{(b)}{=}&\Amap^*(\nabla g)(\Amap \xsol)+C+\{sI\mid s\in \real\} \\ 
     &+ \{-Z\mid Z\succeq 0,\range(Z)= \nullspace(\xsol)\}.
\end{aligned}
\end{equation}  
Here we use the sum rule in step $(a)$ as $\frac{1}{\dm}I$ is in
$\{X\mid \tr(X)=1\}$ and the interior of $\{X\mid X\succeq 0\}$. In step $(b)$, 
we use the sum rule of relative interior. Hence, there is some $\ssol$ and 
$\zsol$ such that 
\begin{equation}
\begin{aligned} 
  & \range(\zsol) =\nullspace(\xsol) \\
 \implies& \inprod{\zsol}{\xsol}=0,\quad \text{and}\\
 &  \rank(\zsol)+\rank(\xsol)=\dm.
\end{aligned} 
\end{equation}
and 
\[\Amap^*(\nabla g)(\Amap \xsol)+C=\zsol +\ssol I. 
\]
We thus conclude $(\zsol,\ssol)$ satisfies the KKT condition \eqref{eq:KKT}, and 
strict complementarity holds.
\end{proof}
\section{SpecFW: minimizing an upper bound of $f(\eta X_t+VSV^\top)$.}\label{sec: upperboundf}
When the function $f$ is not fully known or gradient might be hard to query, we may consider the following
subproblem instead: solve 
\begin{equation}
\begin{aligned}\label{eq:spectralfrankwolfelimitedknowledge}
\text{minimize} \quad & g(\Amap X_{t}) \\
&+\inprod{\Amap (\eta X_{t}+VSV^{\top})-\Amap X_t}{(\nabla g)(\Amap X_t)}\\
&+\frac{L_{g}}{2}\twonorm{\Amap (\eta X_{t}+VSV^{\top})-\Amap X_t}^{2}\\
& +\inprod{C}{\eta X_{t}+VSV^{\top}}\\
\text{subject to}   
\quad & \eta+\tr(S)=1, \;S\succeq 0,\;\text{and}\;\eta \geq 0.
\end{aligned}
\end{equation}
with decision variable $S$ and $\eta$. Then set $X_{t+1}=  \eta X_{t}+VSV^{\top}$
for the optimal $\eta$ and $S$. 

The above formulation enjoys the advantage of efficient computation in terms 
of time
when $m$ is small and the linear map $\Amap$ and $\inprod C{\cdot}$
are easy to apply to low rank matrices. One may also save $\Amap X_t$ during the 
process to avoid forming $X_t$ and sketching $X_t$ using idea from
\citet{tropp2017practical} for storage purpose. 

One could also consider solving 
\begin{equation}
\begin{aligned}\label{eq:spectralfrankwolfefurtherlimitedknowledge}
\text{minimize} \quad & f(X_t) \\
&+\inprod{\eta X_{t}+VSV^{\top}-X_t}{\nabla f(X_t)} \\
&+\frac{L_f}{2}\fronorm{X_t-(\eta X_{t}+VSV^{\top})}\\
\text{subject to}   
\quad & \eta+\tr(S)=1, \;S\succeq 0,\;\text{and}\;\eta \geq 0.
\end{aligned}
\end{equation}
Then set $X_{t+1}=  \eta X_{t}+VSV^{\top}$
for the optimal $\eta$ and $S$. Here $L_f$ is the 
Lipschitz constant of $\nabla f$. This method requires to store $X_t$ in 
each iteration though. 

\section{Combination with matrix sketching idea in \citet{tropp2017practical}}\label{sec: SpecFWmatrixSketching}
When $m$ is on the order $\dm$, we can employ the matrix sketching 
idea developed in  \citet{tropp2017practical} and \citet{yurtsever2017sketchy} to 
achieve storage reduction. We note 
that if we store $\Amap (X_t)=z_t$ and $c_t = \inprod{C}{X_t}$ at each iteration, then we have no problem in 
doing the small-scale SDP \eqref{eq: smallksubproblem}, as
$f(\eta X_t + VSV^\top)= g(\eta (\Amap X_t)+\Amap(VSV^\top))+\eta \inprod {C}{X_t}
+\inprod{C}{VSV^\top}$. If
$\Amap$ and inner product with $C$ can be applied to low rank matrices efficiently, then 
 updating $z_t$ and $c_t$ is not hard  
due to linearity of our updating scheme $X_{t+1}= \eta X_t+VSV^\top$. 
 
Now we explain how to  omit storing the iterate
$X_{t}$. First, we draw two matrices with independent standard normal  entries
\begin{equation}
\begin{aligned}
\Psi \in \real^ {n \times k} \quad \text{with} \quad k=2r+1; \\
\Phi \in \real ^{l\times \dm} \quad \text{with} \quad l=4r+3;
\end{aligned}\nonumber
\end{equation} 
Here $r$ is chosen by the user. It either represents the estimate of the true rank of the primal solution or the user's computational budget in dealing with larges matrices. 

We use $Y^C_t$ and $Y^R_t$ to capture the column space  and the row space  of $X_t$:
\begin{align}\label{eqn: onetimeSketch}
Y^C_t = X_t\Psi \in \real^{\dm \times k},\qquad  Y^R_t =\Phi X_t \in \real^{l\times n} .
\end{align}
Hence we initially have $Y^C_0=0$ and $Y^R_0=0$.
Notice that SpecFW does not observe matrix $X_t$ directly. Rather, it observes a stream of rank $k$ updates
\[ X_{t+1} = VSV^\top+ \eta X_t,\] 
where $V\in \real^\dm \times k$ and $S\in \symMat^{k}$. 

In this setting, $Y^C_{t+1}$ and $Y^R_{t+1}$ can be directly computed as
\begin{align}
Y^C_{t+1} = VS(V^\top\Psi)+ \eta Y^C_t \in \real^{\dm \times k }, \label{eq:sketch-primal-update1}\\
Y^R_{t+1} = (\Psi V)SV^\top+ \eta Y^R_t\in \real^{l\times n} . \label{eq:sketch-primal-update2}
\end{align}
This observation allows us to form the sketch $Y^C_t$ and $Y^R_t$ from the stream of updates. 

We then reconstruct $X_t$ and get the reconstructed matrix $\hat{X}_t$ by
\begin{align}\label{eqn: reconstructionsketch}
Y^C_t = Q_tR_t, \quad  B_t = (\Phi Q_t)^{\dagger} Y^R_t, \quad \hat{X}_t= Q_t[B_t]_r,
\end{align}
where $Q_tR_t$ is the $QR$ factorization of $Y^C_t$ and $[\cdot]_r$ returns the best rank $r$ approximation in Frobenius norm.
Specifically, the best rank $r$ approximation of a matrix $Z$ is $U\Sigma V^*$,
where $U$ and $V$ are right and left singular vectors corresponding to the $r$
largest singular values of $Z$ and $\Sigma$ is a diagonal matrix with $r$ largest singular values of $Z$. In actual implementation, we 
may only produce the factors $(QU, \Sigma, V)$ defining $\hat{X}_T$ in the end instead of reconstructing $\hat{X}_t$ in every iteration. We refer the reader to \citet[Theorem 5.1]{tropp2017practical} for the theoretical guarantees on the reconstruction matrix $\hat{X}_t$. 

Hence we can avoid the \emph{forming a new iteratre} procedure in SpecFW. We remark that 
the reconstructed matrix $\hat{X}_t$ is not necessarily positive semidefinite. However, this suffices for the purpose of finding a matrices close to $X_t$. More sophisticated procedure is available for producing a positive semidefinite approximation of $X_t$ \citep[Section 7.3]{tropp2017practical}.  


\section{Proofs for Section \ref{sec: SpectralFrank-WolfeAndItsConvergenceRate}}\label{sec: AppendixSpectralFrank-WolfeAndItsConvergenceRate}
We first give the detailed calculation of the derivation for \eqref{eq:secondparthm3}. 
\begin{proof}[Continuation of proof of Theorem \ref{thm:MainAlgorithmTheorem}]
We need to choose $\xi\in[0,1]$ so that  $1-\xi+\frac{\xi^{2}\beta}{\gamma}$ is minimized while 
keeping $\xi^{2}\beta-\frac{\xi\lambda_{n-\rsol}(\zsol)}{6}\leq0$. For
$\xi^{2}\beta-\frac{\xi\lambda_{n-\rsol}(\zsol)}{6}\leq0$, we need
$\xi\le\frac{\lambda_{n-\rsol}(\zsol)}{6\beta}$. The function $q(\xi)=1-\xi+\frac{\xi^{2}\beta}{\gamma}$
is decreasing for $\xi\leq\frac{\gamma}{2\beta}$ and increasing for
$\xi\geq\frac{\gamma}{2\beta}$. If $\frac{\gamma}{2\beta}\leq\frac{\lambda_{n-\rsol}(\zsol)}{6\beta}$,
then we can pick $\xi=\frac{\gamma}{2\beta}$, and $q(\xi)=1-\frac{\gamma}{4\beta}$.
If $\frac{\gamma}{2\beta}\geq\frac{\lambda_{n-\rsol}(\zsol)}{6\beta}\implies\frac{\lambda_{n-\rsol}(\zsol)}{\gamma}\leq3$,
then we can pick $\xi=\frac{\lambda_{n-\rsol}(\zsol)}{6\beta}$, and
$q(\xi)=1-\frac{\lambda_{n-\rsol}(\zsol)}{6\beta}+\frac{\lambda_{n-\rsol}^{2}(\zsol)}{36\gamma\beta}=1+\frac{\lambda_{n-\rsol}(\zsol)}{6\beta}\left(\frac{\lambda_{n-\rsol}(\zsol)}{6\gamma}-1\right)\leq1-\frac{\lambda_{n-\rsol}(\zsol)}{12\beta}$. 
\end{proof}

We shall prove Lemma \ref{lem:Let--and} in this section. We 
restate Lemma \ref{lem:Let--and} in a 
self-contained way. 
\begin{lem}
	\label{lem:Suppose--with}Suppose $Y\in\symMat^{\dm}$ with eigenvalues
	$\lambda_{1}(Y)\geq\dots\geq\lambda_{\dm}(Y)$, and $\lambda_{n-r}(Y)-\lambda_{n-r+1}(Y)\geq\delta$.
	Here $\lambda_{i}(\cdot)$ denote the operator of taking the $i$-th
	largest eigenvalue. Also let $v_{1},\dots,v_{n}$ be the corresponding
	orthornomal eigenvectors. Denote the eigenspace corresponding to the
	last $r$eigenvalus of $Y$ as $\mathcal{V}_{Y,r}$ and the corresponding
	orthorgonal projection $P_{Y,r}:\real^{\dm}\rightarrow\real^{\dm}$
	which is also a matrix in $\real^{\dm\times\dm}$. Let $V_{Y,r}\in\real^{\dm\times r}$
	formed by the last $r$ many eigenvectors $v_{n-r+1},\dots v_{n}$
	which represents the eigensapce $\mathcal{V}_{Y,r}$. Define $\face r(Y)=\left\{ V_{Y,r}SV_{Y,r}^{\top}\mid S\succeq0,\tr(S)=1\right\} .$
	Then for any $X\in\symMat^{\dm}$ with $\tr(X)=1,X\succeq0$, there
	is some $W\in\face r(Y)$ such that 
	\[
	\inprod{X-W}Y\geq\frac{\delta}{2}\fronorm{X-W}^{2}.
	\]
\end{lem}

\begin{rem}
	We note that as long as $\range(V)=\range(V_{Y,r})$ for some matrix
	$V\in\real^{\dm\times r}$ with orthonormal columns, the set $\face r(Y)$
	is the same as $\left\{ VSV^{\top}\mid S\succeq0,\tr(S)=1\right\} $.
\end{rem}

\begin{proof}[Proof of Lemma \ref{lem:Let--and}]
	We first decompose $X$ by 
	\[
	X=\underbrace{\left(X-P_{Y,r}XP_{Y,r}\right)}_{X_{1}}+\underbrace{P_{Y,r}XP_{Y,r}}_{=:X_{2}}.
	\]
	Note that $P_{Y,r}=P_{Y,r}^{\top},$ so $X_{2}=P_{Y,r}XP_{Y,r}$ is
	still symmetric. Let $1-\epsilon=\tr(P_{Y,r}XP_{Y,r})$. Since $\tr(X)=1,$
	we have $\epsilon=\tr(X-P_{Y,r}XP_{Y,r}).$ We have $\epsilon\in[0,1]$
	as $\tr(P_{Y,r}XP_{Y,r})=\inprod{X}{P_{Y,r}P_{Y,r}}\overset{(a)}{\leq }\opnorm{P_{Y,r}}\tr(X)\leq 1$
		where step $(a)$ is due to H\"older's inequality.
	
	Consider the eigenvalue decomposition of $X_{2}=V_{2}\Lambda_{2}V_{2}^{\top},$
	where $V_{2}\in\real^{\dm\times r}$ and $\Lambda_{2}\in\symMat^{r}$
	with all diagonal nonnegative. Here the column space of $V_{2}$ satisfies
	$\range(V_{2})=\mathcal{V}_{Y,r}$. 
	
	Because $P_{Y,r}XP_{Y,r}=X_{2}$ is a member in $\face r(Y)$, we
	know there is an $W\in\face r(Y)$ such that $W=V_{2}\Lambda_{W}V_{2}^{\top}$
	where $\Lambda_{W}\in\symMat^{r}$ has nonegative diagonal with $\tr(\Lambda_{W})=1$
	and the difference matrix $\Delta=\Lambda_{W}-\Lambda_{2}$ has nonnegative
	entries. We also have $\tr(\Delta)=\epsilon,$ as the trace of both
	$\Lambda_{W}$ and $X$ are one. 
	
	With such choice of $W$, let us now analyze $\inprod{X-W}Y:$
	\begin{equation} 
	\begin{aligned}\label{eq:decompositioninnerproductXWYfirstStep}
	\inprod{X-W}Y & =\inprod{X_{1}}Y+\inprod{X_{2}-W}Y\\
	& =\underbrace{\inprod{X-P_{Y,r}XP_{Y,r}}{\sum_{i=1}^{n}\lambda_{i}(Y)v_{i}v_{i}^{\top}}}_{R_1}\\
	&-\underbrace{\inprod{V_{2}\Delta V_{2}^{\top}}{\sum_{i=1}^{n}\lambda_{i}(Y)v_{i}v_{i}^{\top}}}_{R_2}.
	\end{aligned}
	\end{equation}

	The first term $R_1=\inprod{X-P_{Y,r}XP_{Y,r}}{\sum_{i=1}^{n}\lambda_{i}(Y)v_{i}v_{i}^{\top}}$
	satifies 
	\begin{align*}
	&\inprod{X-P_{Y,r}XP_{Y,r}}{\sum_{i=1}^{n}\lambda_{i}(Y)v_{i}v_{i}^{\top}} \\
       \overset{(a)}{=} &\sum_{i=1}^{n}\lambda_{i}(Y)v_{i}^{\top}Xv_{i}-\sum_{i=n-r+1}^{n}\lambda_{i}(Y)v_{i}^{\top}Xv_{i}\\
	                  = & \sum_{i=1}^{n-r}\lambda_{i}(Y)v_{i}^{\top}Xv_{i}\\
	 \overset{(b)}{\geq}&(\lambda_{n-r+1}(Y)+\delta)\sum_{i=1}^{n-r}v_{i}^{\top}Xv_{i}.
	\end{align*}
	Here in step $(a)$ we uses the fact that $P_{Y,r}v_{i}=v_{i}$ for
	$i=n-r+1,\dots n$ and is zero for other $v_{i}.$ In step $(b)$,
	we use the assumption that $\lambda_{n-r}-\lambda_{n-r+1}\geq\delta$
	and each $v_{i}^{\top}Xv_{i}\geq0$ as $X\succeq0$. We note that
	$\sum_{i=1}^{n-r}v_{i}^{\top}Xv_{i}$ satifies 
	\begin{align*}
	\sum_{i=1}^{n-r}v_{i}^{\top}Xv_{i} & =\tr\left(X\left(\sum_{i=1}^{n-r}v_{i}v_{i}^{\top}\right)\right)\overset{(a)}{=}\tr(X(I-P_{Y,r}))\\
	&\overset{(b)}{=}\tr(X)-\tr(P_{Y,r}XP_{Y,r})=\epsilon.
	\end{align*}
	Here step $(a)$ uses the $P_{Y,r}=V_{Y,r}V_{Y,r}^{\top}$ and we
	use $P_{Y,r}^{2}=P_{Y,r}$ and cyclic property of trace in step $(b).$ 
	
	Now let us analyze the second term $R_2$:
	\begin{align*}
	R_2 & =\inprod{V_{2}\Delta V_{2}^{\top}}{\sum_{i=1}^{\dm}\lambda_{i}(Y)v_{i}v_{i}^{\top}}\\
	& \overset{(a)}{=}\inprod{V_{2}\Delta V_{2}^{\top}}{\sum_{i=n-r+1}^{\dm}\lambda_{i}(Y)v_{i}v_{i}^{\top}}.
	\end{align*}
	Here we use the fact that $V_{2}^{\top}v_{i}=0$ for all $v_{i},i=1,\dots n-r$.
	Since $V_{Y,r}$ and $V_{2}$ are both orthonormal representation
	of $\mathcal{V}_{Y,r}$, we know there is an orthonormal matrix $O\in\real^{r\times r}$
	such that $V_{Y,r}=V_{2}O$. Define the linear operator $\diag:\symMat^{\dm}\rightarrow \real^\dm$
	, which takes the diagonal of a matrix. Let $\Lambda_{Y,r}=\diag ^*\left(\lambda_{n-r+1}(Y),\dots,\lambda_{n}(Y)\right),$
	we see $R_2$ further equals to 
	\begin{align*}
	R_2 & =\tr\left(V_{2}\Delta V_{2}^{\top}V_{2}O\Lambda_{Y,r}O^{\top}V_{2}^{\top}\right)\\
	& \overset{(a)}{=}\tr\left(\Delta O\Lambda_{Y,r}O^{\top}\right)\\
	& \overset{(b)}{\leq}\epsilon\lambda_{n-r+1}(Y).
	\end{align*}
	Here we use the cyclic property in step $(a)$ and the step $(b)$
	is an easy consequence of $\Delta$ has nonnegative diagonal and Von
	Neumann's trace inequality: for symmetric matrices $A,B\in\symMat^{r},\inprod AB\leq\sum_{i=1}^{r}\lambda_{i}(A)\lambda_{i}(B)$.
	Combining pieces, we find that 
	\[
	\inprod{X-W}Y\geq(\lambda_{n-r+1}(Y)+\delta)\epsilon-\epsilon\lambda_{n-r+1}(Y)=\delta\epsilon.
	\]
	Now we turn to analyzing the term $\fronorm{X-W}^{2}$. Using $\inprod{X_{1}}{X_{2}}=0,\inprod{X_{1}}W=0,$
	we find that
	\[
	\fronorm{X-W}^{2}=\fronorm{X_{1}}^{2}+\fronorm{X_{2}-W}^{2}.
	\]
	The second term $\fronorm{X_{2}-W}^{2}$ satisfies 
	\[
	\fronorm{X_{2}-W}=\fronorm{V_{2}\Delta V_{2}^{\top}}^{2}=\sum_{i=1}^{r}\Delta_{ii}^{2}\leq\left(\sum_{i=1}^{r}\Delta_{ii}\right)^{2}=\epsilon^{2}.
	\]
	If we write $X$ in terms of the coordinates given by $V_{2}$ and
	its orthogonal compliment say $V_{1}$, then in this new coordinate
	$V=[V_{1},V_{2}]$:
	\[
	V^\top XV=\begin{bmatrix}A & B\\
	B & V_2^\top X_{2}V_2
	\end{bmatrix},\quad\text{and}\quad V^\top X_{1}V=\begin{bmatrix}A & B\\
	B & 0
	\end{bmatrix}.
	\]
	Then $\tr(X_{1})=\tr(A)$. Lemma \ref{lemma: auxSecOracleinequalities} 
	implies that 
	\[
	\fronorm B^{2}\leq\tr(X_{2})\tr(A)=\epsilon(1-\epsilon)=\epsilon-\epsilon^{2}.
	\]
	Hence $\fronorm{X_{1}}^{2}=\fronorm A^{2}+2\fronorm B^{2}\leq\left(\tr(A)\right)^{2}+2\epsilon-2\epsilon^{2}=-\epsilon^{2}+2\epsilon.$
	Combining pieces and $\epsilon\in[0,1]$, we find that 
	\begin{align*}
	\fronorm{X-W}^{2} & \leq2\epsilon=\frac{2}{\delta}\delta\epsilon\leq \frac{2}{\delta } \inprod{X-W}Y\\
	\implies & \inprod{X-W}Y\geq\frac{\delta}{2}\fronorm{X-W}^{2}.
	\end{align*}
	
\end{proof}
\begin{lem} \label{lemma: auxSecOracleinequalities}
	Suppose
	$Y = \begin{bmatrix}
	A & B \\ B^\top   & D
	\end{bmatrix} \succeq 0$.
	Then $ \opnorm{A} \tr(D) \geq \nucnorm{BB^\top}=\tr(BB^\top)=\fronorm{B}^2.$
\end{lem}
\begin{proof}
	For any $\epsilon>0$, denote  $A_\epsilon = A + \varepsilon I$ and
	$ Y_\epsilon = \begin{bmatrix}
	A_\epsilon & B \\ B^*  & D
	\end{bmatrix}.$  We know $Y_\epsilon$ is psd,
	as is its Schur complement
	$D - B^\top  A_\epsilon ^{-1}B \succeq 0$ with trace $\tr(D) - \tr({A}_\epsilon^{-1} BB^\top ) \geq 0.$
	
	Von Neumann's lemma for $A_\epsilon$, $BB^{\top}\succeq 0$
	shows $\tr(A_\epsilon^{-1} BB^* ) \geq \dfrac{1}{\opnorm{A_{\epsilon}}}\nucnorm{BB^\top }$.
	Use this with the previous inequality to see
	$\tr(D) \geq \frac{1}{\opnorm{A_\epsilon}}\nucnorm{BB^\top}.$
	Multiply by $\opnorm{A_{\epsilon}}$ and let $\varepsilon \to 0 $
	to complete the proof. 
\end{proof}
\section{Lemmas for Section \ref{sec: QuadraticGrowthAndLinearConvergence}}\label{sec: lemmaForSection4}
We first give a self-contained proof for the second case of Theorem \ref{thm:MainStructralTheorem}.
\begin{proof}[Proof of second case of Theorem \ref{thm:MainStructralTheorem}]
For any feasible $X$ and
the optimal solution $\xsol,$ we have 
\begin{align*}
f(X)-f(\xsol) & \overset{(a)}{\geq}\inprod{\nabla f(\xsol)}{X-\xsol}\\
& \overset{(b)}{=}\inprod{\zsol+\ssol I}{X-\xsol}\\
& \overset{(c)}{=}\inprod{\zsol}{X-\xsol}.
\end{align*}
Here step $(a)$ is due to the convexity of $f$. For step
$(b)$, we uses the first order condition of KKT condition (\ref{eq:KKT}).
The step $(c)$ is due to feasibility of $X$ and $\xsol.$ 

Since $\zsol$ has rank $n-1$, using strict complementarity, we reach
that any optimal solution $\xsol$ has rank $1$ with $\range(\xsol)=\nullspace(\zsol)$.
Thus any optimal solution $\xsol$ is of the form $\xsol=\xi vv^{\top}$,
$v$ is the non-zero unit vector in the null space of $\zsol$, and
$\xi$ is a nonnegative scaler. Since $\xsol$ has to be feasible,
the constraint $\tr(\xsol)=1$ implies that $\xi=1$ and hence the
solution $\xsol$ is unique. The same argument implies that the set
$\face 1(\zsol)=\left\{ \xsol\right\} .$ Hence using Lemma \ref{lem:Let--and} and
$\lambda_{n}(\zsol)=0$, we see that 
\[
f(X)-f(\xsol)\geq\inprod{\zsol}{X-\xsol}\geq\frac{\lambda_{n-1}(\zsol)}{2}\fronorm{X-\xsol}^{2}.
\]
\end{proof}
Next, we establish the lemma that is core to the proof of Theorem \ref{thm:MainStructralTheorem} under the assumption of uniqueness. 

\begin{lem}
	\label{lem:Suppose-the-following}Suppose the following system admits
	a unique solution $\xsol$ with rank $\rsol:$ 
	\begin{equation}
	\inprod{\zsol}{\xsol}=0,\Amap X=b,\quad\text{and\ensuremath{\quad X\succeq0},}\label{eq: linearmatrixsystem}
	\end{equation}
	for a $\zsol\succeq0$ such that $\rank(\zsol)+\rank(\xsol)=\dm$,
	a linear map $\Amap:\symMat^{\dm}\rightarrow\real^{\cons}$, and a
	vector $b\in\real^{\cons}$. Furthur suppose that $\Amap X=b\implies\tr(X)=1.$
	Then for any $X\succeq0$ with $\tr(X)=1$, we have 
	\begin{equation}\label{eq: X-xsolZXAX-Axsol}
	\begin{aligned}
	\fronorm{X-\xsol}^{2}\;\leq\left(4+8\frac{\sigma_{\max}(\Amap)}{\sigma_{\min}(\Amap_{V})}\right)\frac{\inprod{\zsol}X}{\lambda_{n-\rsol}(\zsol)}\\
	+\frac{4}{\sigma_{\min}^{2}(\Amap_{V})}\twonorm{\Amap(X)-b}^{2}.
	\end{aligned}
	\end{equation}
\end{lem}

\begin{proof}
	Let $V\in\mathbb{R}^{n\times\rsol}$ be a matrix with orthonormal
	columns correpsonding to the eigenspace $\mathcal{V}$ of $\xsol$
	of positive eigenvalues. Then $\xsol$ can be written as $\xsol=VS_{\star}V^{\top}$
	for some $S_{\star}\in\symMat^{\rsol}$ such that $S_{\star}\succ0.$
	We claim that the linear map $\Amap_{V}$ defined as follows is injective:
	\begin{align*}
	\Amap_{V} & :\symMat^{\rsol}\rightarrow\real^{\cons}\\
	& S\mapsto\Amap(VSV^{\top}).
	\end{align*}
	Suppose not, then there is some nonzero $S_{0}\in\symMat^{\rsol}$
	such that $\Amap_{V}(S_{0})=0$. Then $V(\alpha S_{0}+S_{\star})V^{\top}$also
	satisfies the system (\ref{eq: linearmatrixsystem}) for all small
	enough $\alpha$. Hence we see that for any $S\in\symMat^{r}$
	\begin{equation}\label{eq:firstinequalityqudraticgrowthlemma}
	\begin{aligned} 
	\fronorm{VSV^{\top}-\xsol}\leq&\frac{1}{\sigma_{\min}(\Amap_{V})}\twonorm{\Amap(VSV^{\top})-\Amap(\xsol)}\\
	=&\frac{1}{\sigma_{\min}(\Amap_{V})}\twonorm{\Amap(VSV^{\top})-b}.
	\end{aligned}
	\end{equation}
	Here $\sigma_{\min}(\Amap_{V})=\min_{\fronorm S=1}\twonorm{\Amap_{V}(S)}>0$. 
	
	Using strict complementarity on $\zsol$ and $\xsol$, we know $V$ is also a representation
	of the null space of the $\zsol.$ Using Lemma \ref{lem:Let--and},
	we know there is some $W=VSV^{\top}\in\face{\rsol}(\zsol)$ such that
	\begin{equation}
	\inprod X{\zsol}\overset{(a)}{=}\inprod{X-W}{\zsol}\geq\frac{\lambda_{n-\rsol}(\zsol)}{2}\fronorm{X-W}^{2},\label{eq:quadraticgrowthlemmaZero}
	\end{equation}
	where step $(a)$ is because $\lambda_{n-\rsol+1}(\zsol)=\dots=\lambda_{\dm}(\zsol)=0$.
	We note if $\rsol=1$, then $\face r(\zsol)$ has $\xsol$ as its
	only element, as $\tr(X)=1$ and we are done. 
	
	We can bound $\fronorm{X-\xsol}^{2}$ by 
	\begin{align}
	\fronorm{X-\xsol}^{2} & \overset{(a)}{\leq}2\fronorm{X-W}^{2}+2\fronorm{W-\xsol}^{2}\label{eq:secondinequalityqudraticgrowthlemma-revised}\\
	& \overset{(b)}{\leq}2\fronorm{X-W}^{2}+\frac{2}{\sigma_{\min}^{2}(\Amap_{V})}\twonorm{\Amap(W)-b}^{2}.\nonumber 
	\end{align}
	Here we use triangle inequality and basic inequality $(a+c)^{2}\leq2a^{2}+2c^{2}$
	for any real $a,c$ in step $(a)$. In step $(b)$, we use (\ref{eq:firstinequalityqudraticgrowthlemma}). 
	
	We can further bound the term $\twonorm{\Amap(W)-b}$ by 
	\begin{equation}\label{eq:thirdinequalityqudraticgrowthlemma-revised}
	\begin{aligned}
	\twonorm{\Amap(W)-b} &=\twonorm{\Amap(W-X)+\Amap(X)-b}\\
	&\leq\twonorm{\Amap(W-X)}+\twonorm{\Amap(X)-b}.
	\end{aligned} 
	\end{equation}
	Now combining (\ref{eq:secondinequalityqudraticgrowthlemma-revised}),
	(\ref{eq:thirdinequalityqudraticgrowthlemma-revised}) and $(a+c)^{2}\leq2a^{2}+2c^{2}$
	for any $a,c\in\real$ in the following step $(a)$, we see 
	\begin{equation}\label{eq:fourthinequalityqudraticgrowthlemma-revised} 
	\begin{aligned}
	\fronorm{X-\xsol}^{2} & \overset{(a)}{\leq}2\fronorm{X-W}^{2}+\frac{4\twonorm{\Amap(W-X)}^{2}}{\sigma_{\min}^{2}(\Amap_{V})}\\
	&+\frac{4}{\sigma_{\min}^{2}(\Amap_{V})}\twonorm{\Amap(X)-b}^{2}\\
	& \leq\left(2+4\frac{\sigma_{\max}^{2}(\Amap)}{\sigma_{\min}^{2}(\Amap_{V})}\right)\fronorm{X-W}^{2}\\
	&+\frac{4}{\sigma_{\min}^{2}(\Amap_{V})}\twonorm{\Amap(X)-b}^{2}.\nonumber 
	\end{aligned}
    \end{equation} 
	Finally using (\ref{eq:quadraticgrowthlemmaZero}) to bound $\fronorm{X-W}$,
	we reached the inequality we want to prove:
	\begin{equation*} 
	\begin{aligned}
	\fronorm{X-\xsol}^{2} &\leq\left(4+8\frac{\sigma_{\max}^{2}(\Amap)}{\sigma_{\min}^{2}(\Amap_{V})}\right)\frac{\inprod{\zsol}X}{\lambda_{n-\rsol}(\zsol)}\\
	&+\frac{4}{\sigma_{\min}^{2}(\Amap_{V})}\twonorm{\Amap(X)-b}^{2}.
	\end{aligned} 
	\end{equation*}
\end{proof}

We now establish a lemma to handle the general case that the solution might not be unique. 
For a convex closed set $\mathcal{X}_\star$, we define the distance to for an arbitrary $X\in \symMat^{\dm}$ to it as 
\[
\dist(X,\mathcal{X}_\star):\,= \inf_{\xsol \in \mathcal{X}_\star}\fronorm{X-\xsol}.
\]

\begin{lem}
	\label{lem: nonuniquesolutionQGLemma} Denote the solution set of the following system as $\mathcal{X}_\star$:
	\begin{equation}
	\inprod{\zsol}{\xsol}=0,\Amap X=b,\quad\text{and\ensuremath{\quad X\succeq0},}\label{eq: linearmatrixsystemNonuniqueSolution}
	\end{equation}
	for a $\zsol\succeq0$, a linear map $\Amap:\symMat^{\dm}\rightarrow\real^{\cons}$, and a
	vector $b\in\real^{\cons}$. 
	 Suppose the system \eqref{eq: linearmatrixsystem} admits a solution $\xsol^0$ with rank $\rsol^0\geq 1$ such that  $\rank(\zsol)+\rank(\xsol^0)=\dm$. 
	 Further suppose that $\Amap X=b\implies\tr(X)=1.$ 
	Then the constant $\mu:=\sup\{a \geq 0\mid a\cdot \dist(X,\mathcal{X}_\star)\leq 
\twonorm{\Amap(X)-b} \text{ for all } X\in \face{\rsol}(\zsol)\}$ 
	is positive, and
	for any $X\succeq0$ with $\tr(X)=1$, we have 
	\begin{equation}\label{eq: X-xsolZXAX-AxsolNotUniqueSolution}
	\begin{aligned}
	\dist(X,\mathcal{X}_\star)^{2}\;\leq\left(4+8\frac{\sigma_{\max}(\Amap)}{\mu}\right)\frac{\inprod{\zsol}X}{\lambda_{n-\rsol}(\zsol)}\\
	+\frac{4}{\mu^{2}}\twonorm{\Amap(X)-b}^{2}.
	\end{aligned}
	\end{equation}
\end{lem}

\begin{proof}
	Let $V\in\mathbb{R}^{n\times\rsol}$ be a matrix with orthonormal
	columns corresponding to the eigenspace $\mathcal{V}$ of $\rsol$
	zero eigenvalues. Consider the linear map $\Amap_{V}$:
	\begin{align*}
	\Amap_{V} & :\symMat^{\rsol}\rightarrow\real^{\cons}\\
	& S\mapsto\Amap(VSV^{\top}).
	\end{align*}
	The key replacement of multiple solution setting is to establish an inequality similar to 
	\eqref{eq:firstinequalityqudraticgrowthlemma}, which depicts the injectivity of$\Amap_V$ for unique solution setting. 
	
	Define the solution set $\mathcal{S}\subset \symMat^{\rsol}$ of the following system:
	\begin{equation} \label{eq: reducedSystem}
	\Amap_{V}(S)=b, \quad S\succeq 0. 
	\end{equation}
	Note that any $S\in \mathcal{S}$ satisfies that $VSV^\top \in \mathcal{X}_\star$. Conversely, for any $\xsol\in \mathcal{X}_\star$, it can be written as $\xsol=VS_{\star}V^{\top}$
	for some $S_{\star}\in\symMat^{\rsol}$ such that $S_{\star}\succeq0$ and $\Amap_{V}(S_\star)=b$. Hence we have $\mathcal{X}_\star = \{X\mid X=VSV^\top, \; S\in \mathcal{S} \}$. 
	
	Now if we take the $\xsol^0\in \mathcal{X}_\star$ such that $\rank(\zsol)+\rank(\xsol^0)=\dm$,
	then $\xsol^0=VS_{\star}^0V^{\top}$
	for some $S_{\star}^0\in\symMat^{\rsol}$ such that $S_{\star}^0\succ 0.$ This means the system \eqref{eq: reducedSystem} satisfies the condition in Corollary 3 in \cite{bauschke1999strong}. By applying this corollary to \eqref{eq: reducedSystem}, we know there is a 
	$\mu >0$ such that for all $S\succeq 0$ and $\tr(S)=1$,
	\begin{equation}
	    \dist(S,\mathcal{S})\leq \frac{1}{\mu} \twonorm{\Amap_V{S}-b}.
	\end{equation}
	Translating the inequality to the space $\mathcal{L}=\{X\in \symMat\mid X=VSV^\top \text{ for some }S\in \symMat^{\rsol}\}$, we have for all $X\succeq 0$, $\tr(X)=1$, and $X\in \mathcal{L}$, i.e., $X\in \face{\rsol}(\zsol)$ :
	\begin{equation}
	    \dist(X,\mathcal{X}_\star)\leq  \frac{1}{\mu}\twonorm{\Amap(X)-b}.
	\end{equation}
	This is our replacement of \eqref{eq:firstinequalityqudraticgrowthlemma} in Lemma \ref{lem:Suppose-the-following}. 
	
	Following the proof of Lemma \ref{lem:Suppose-the-following}, 
	we know there is some $W=VSV^{\top}\in\face{\rsol}(\zsol)$ such that
	\begin{equation}
	\inprod X{\zsol}=\inprod{X-W}{\zsol}\geq\frac{\lambda_{n-\rsol}(\zsol)}{2}\fronorm{X-W}^{2}.\label{eq:quadraticgrowthlemmaZeroNotUniqueSolution}
	\end{equation}
	
	To bound $\dist(X,\mathcal{X}_\star)$, we pick an $\xsol\in \mathcal{X}_\star$ such that it is nearest to $W$ (mote $\mathcal{X}_\star$ is compact as $\Amap(X)=b$ implies $\tr(X)=1$). Then we have 
	\begin{align}
	\dist(X,\mathcal{X}_\star)^{2}  
	&\leq \fronorm{X-\xsol}^2\\
	& \overset{(a)}{\leq}2\fronorm{X-W}^{2}+2\fronorm{W-\xsol}^{2}\label{eq:secondinequalityqudraticgrowthlemma-revisedNotUniqueSolution}\\
	& \overset{(b)}{\leq}2\fronorm{X-W}^{2}+\frac{2}{\mu^2}\twonorm{\Amap(W)-b}^{2}.\nonumber 
	\end{align}
	Here we use triangle inequality and basic inequality $(a+c)^{2}\leq2a^{2}+2c^{2}$
	for any real $a,c$ in step $(a)$. In step $(b)$, we use \eqref{eq:firstinequalityqudraticgrowthlemma}. The rest of the proof 
	is exactly the same as those in Lemma \ref{lem:Suppose-the-following}.
\end{proof}

The following Lemma establishes the linear convergence of G-BlockFW under quadratic growth condition. 
\begin{lem}\label{lem: linearConvergenceOfBlockFW}
Suppose $f$ of Problem \eqref{eq:Mainoptimization} is $\beta$ smooth and Problem \eqref{eq:Mainoptimization} satisfies quadratic growth with parameter $\gamma$. If $\eta = \frac{\gamma}{\beta} $ and $k\geq \rsol =\rank(\xsol)$, where $\xsol$ is an optimal solution 
of  Problem \eqref{eq:Mainoptimization},  then the generalized 
Block FW \ref{alg:generalizedBlockFW} converges linearly: 
\[
h_{t+1}\leq (1-\frac{\gamma}{2\beta})h_t,
\]
where $h_t = f(X_t)-f(\xsol)$  for each $t$. 
\end{lem}
\begin{proof}
	Denote $\hat{Y} = V\diag(\Lambda)V^\top$. The Lipschitz smoothness of $f$ shows that 
	\begin{equation}
	\begin{aligned} \label{eq: blockFWFirstInequality}
	f(X_{t+1})\leq f(X_t)+\eta\inprod{\hat{Y}-X_t}{\nabla f(X_t)}+\frac{\eta^2\beta }{2}\fronorm{\hat{Y}-X_t}^2.
	\end{aligned} 
	\end{equation}
	Using a similar argument as \citet[Lemma 3.1]{allen2017linear}, 
	we have 	
	\[\hat{Y} =\arg\min_{Y\in \specsplex, \rank(Y)\leq \rsol }
	\eta\inprod{\hat{Y}-X_t}{\nabla f(X_t)}+\frac{\eta^2\beta }{2}\fronorm{\hat{Y}-X_t}^2.\]
	Hence, we can replace $\hat{Y}$ in \eqref{eq: blockFWFirstInequality} by $\xsol$ in the following step (a),
		\begin{equation}
	\begin{aligned} \label{eq: blockFWSecondInequality}
	f(X_{t+1})
	&\overset{(a)}{\leq} f(X_t)+\eta\inprod{\xsol-X_t }{\nabla f(X_t)}+\frac{\eta^2\beta }{2}\fronorm{\xsol -X_t}^2\\
	&\overset{(b)}{\leq} f(X_t)-\eta(f(X_t)-f(\xsol)+\frac{\eta^2\beta}{2\gamma}(f(X_t)-f(\xsol)),
	\end{aligned} 
	\end{equation}
where step $(b)$ is due to the qudratic growth of Problem \eqref{eq:Mainoptimization}. Now subtract both sides by $f(\xsol)$, and let $h_t = f(X_t)-f(\xsol)$ for each $t$, we find that 
\[
h_{t+1}\leq (1-\eta+\frac{\eta^2\beta}{2\gamma})h_t.
\]
Our choice $\eta = \frac{\gamma }{\beta}$ set 
$(1-\eta+\frac{\eta^2\beta}{2\gamma})=1-\frac{\gamma}{2\beta}$ which is what we desired. 
\end{proof}

\section{Additional Numerics}\label{sec: additionalNumerics}
We include extra numerics for $n=100,200,400$ in Figure \ref{fig:correctK}, \ref{fig:missK}. 
As can be seen, SpecFW in these cases are a bit slower than G-BlockFW when $\tau=0.5$ and $c=0.5$. SpecFW is as good as 
FW when $k$ is miss specified. 

\paragraph{What if $\nabla f(\xsol)=0$?} Here we also discuss an interesting situation that $c=0$, and $\tau=1$, then we see 
$\xsol = U_\natural U_\natural^\top$ is an optimal solution and gradient in this case is $0$. 
Such situation means strict complementarity fails and the small perturbation to $\tau$ will 
result in a higher-rank solution, meaning the convex relaxation \eqref{eq: quadraticSensing}
is ill-posed for the purpose of low-rank matrix recovery [Lemma 2]\cite{garber2019linear}. Indeed,
this is where SpecFW is not advantageous comparing to G-BlockFW as shown in Figure \ref{fig:Nonoise}.
$\tau=1$ and $c=0$. 
\begin{figure}[t!]
	\begin{center}
		\subfigure[$n=100$]{\includegraphics[width=0.7\linewidth]{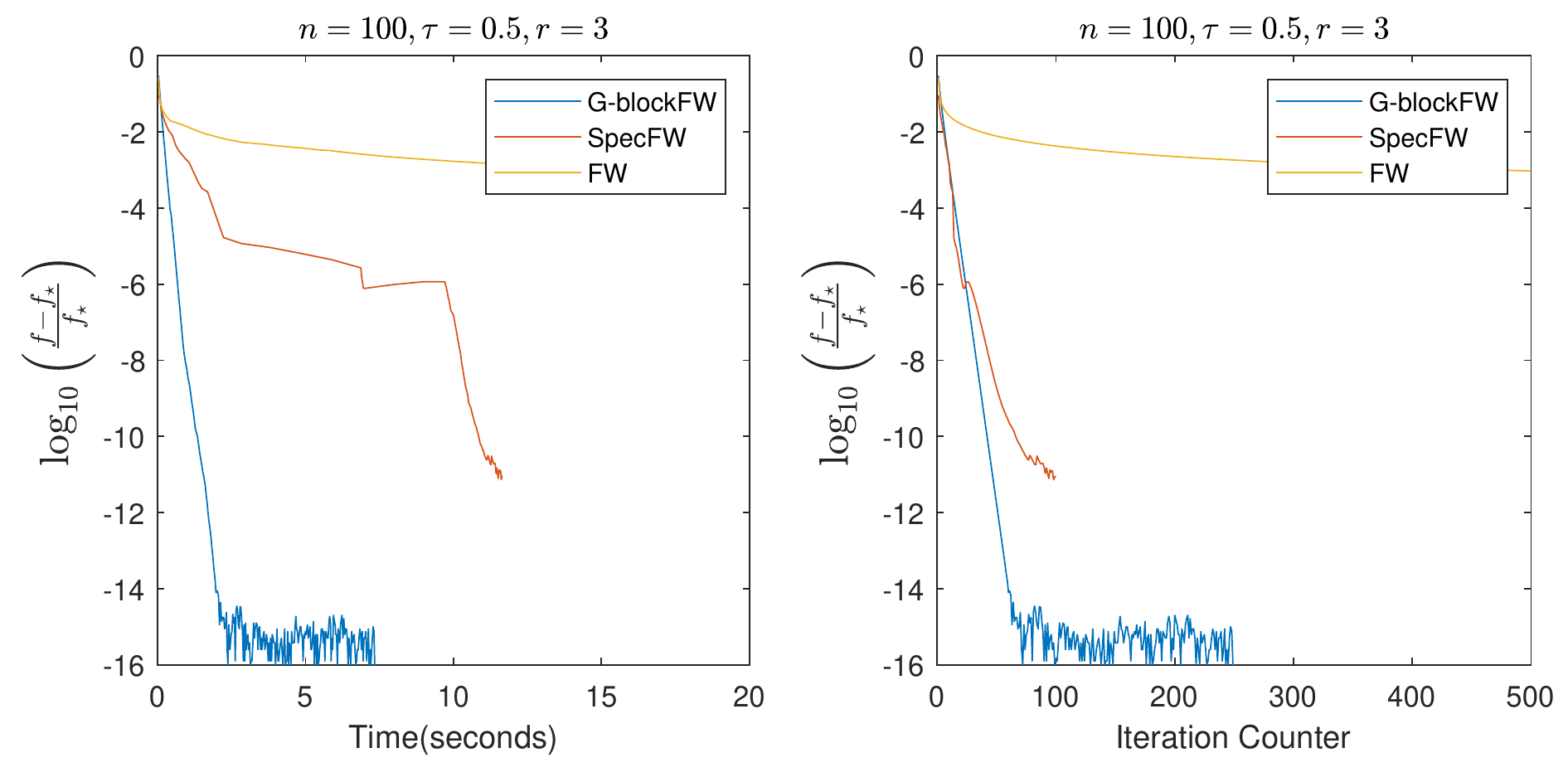}}
		\subfigure[$n=200$]{\includegraphics[width=0.7\linewidth]{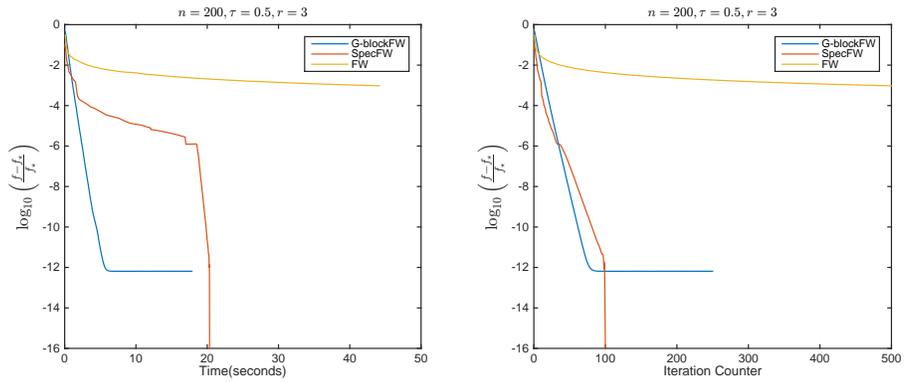} }
			\subfigure[$n=400$]{\includegraphics[width=0.7\linewidth]{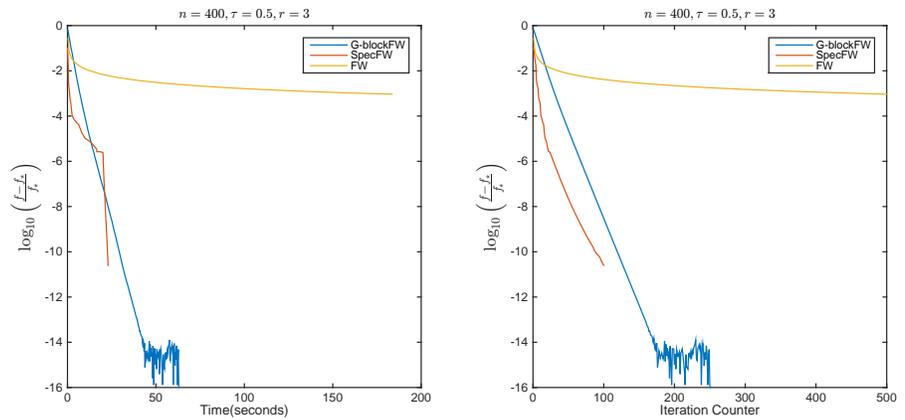} }
	\end{center}
	\caption{Comparison of algorithms under $\tau=\frac{1}{2}$ and noise level $c=0.5$.}
	\label{fig:correctK}
\end{figure}
\begin{figure}[t!]
	\begin{center}
		\subfigure[$n=100$]{\includegraphics[width=0.7\linewidth]{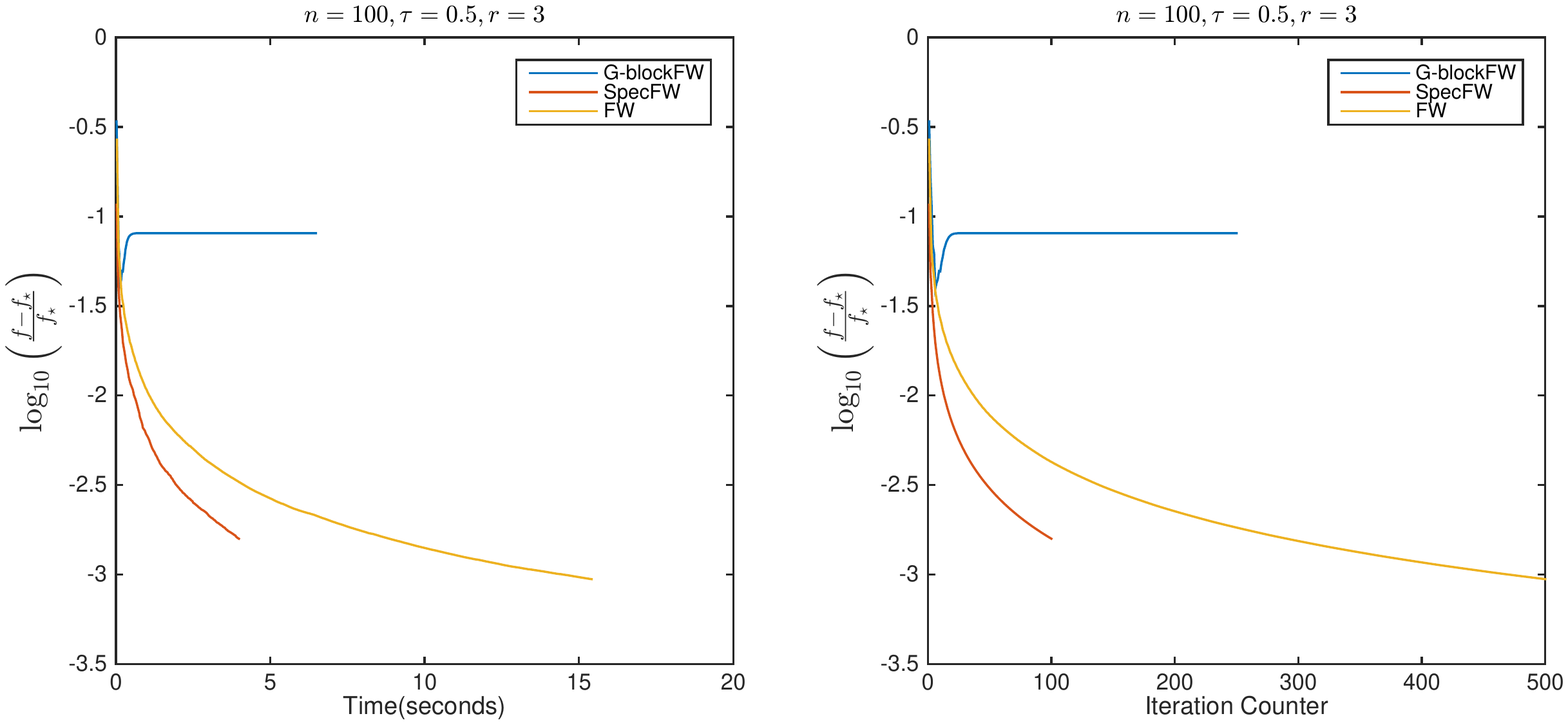}}
		\subfigure[$n=200$]{\includegraphics[width=0.7\linewidth]{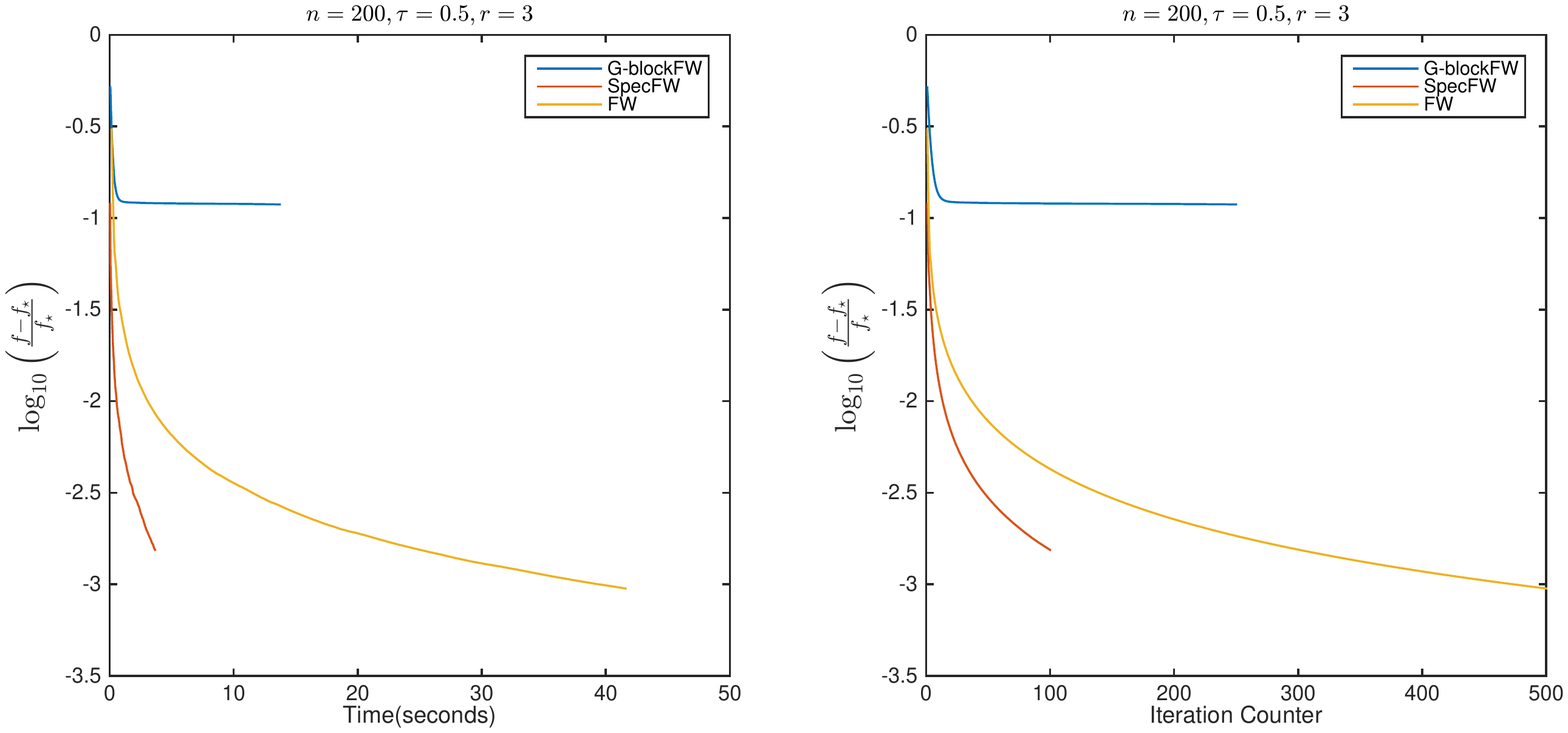} }
			\subfigure[$n=400$]{\includegraphics[width=0.7\linewidth]{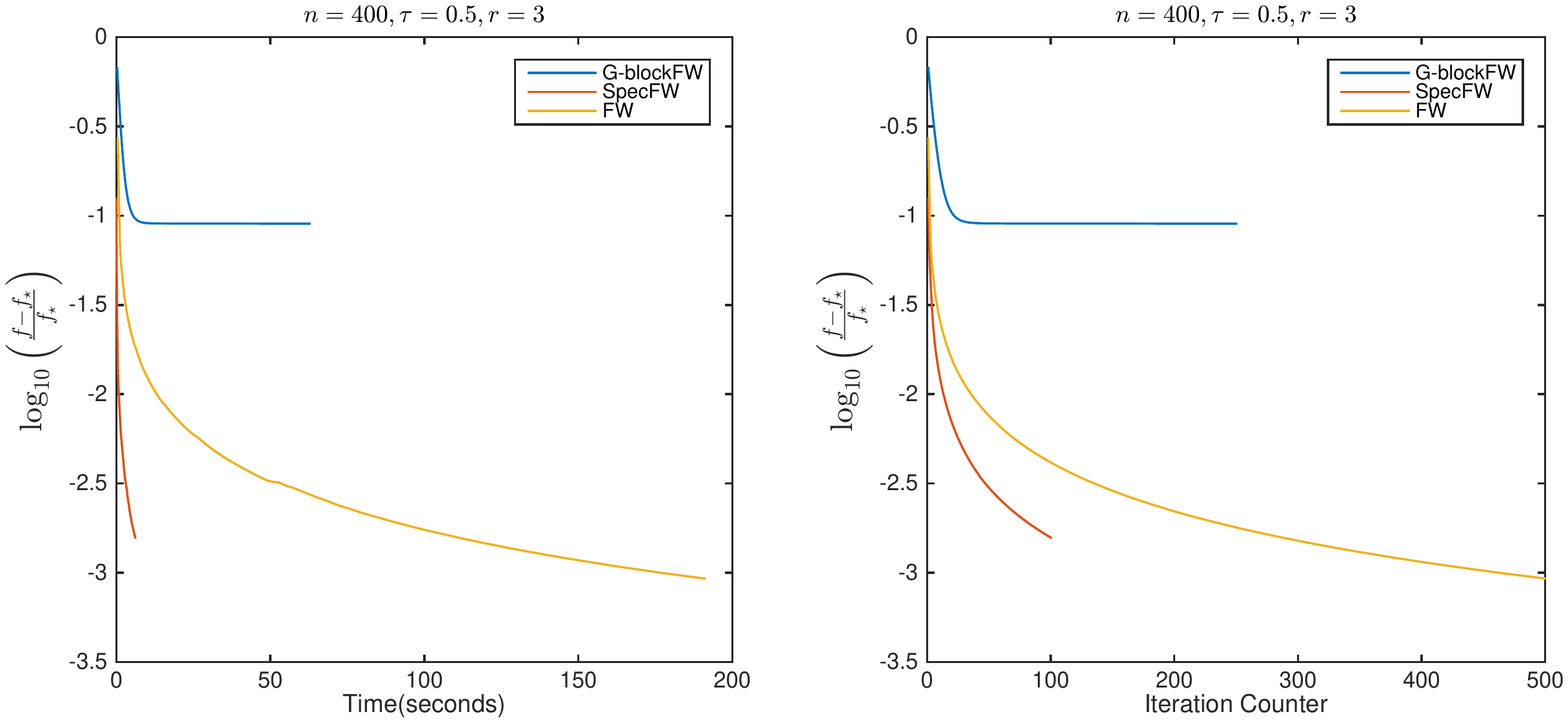} }
	\end{center}
	\caption{Comparison of algorithms under $\tau=\frac{1}{2}$, noise level $c=0.5$, and $k=2<\rsol$.}
	\label{fig:missK}
\end{figure}
\begin{figure}[t!]
	\begin{center}
		\subfigure[$n=100$]{\includegraphics[width=0.7\linewidth]{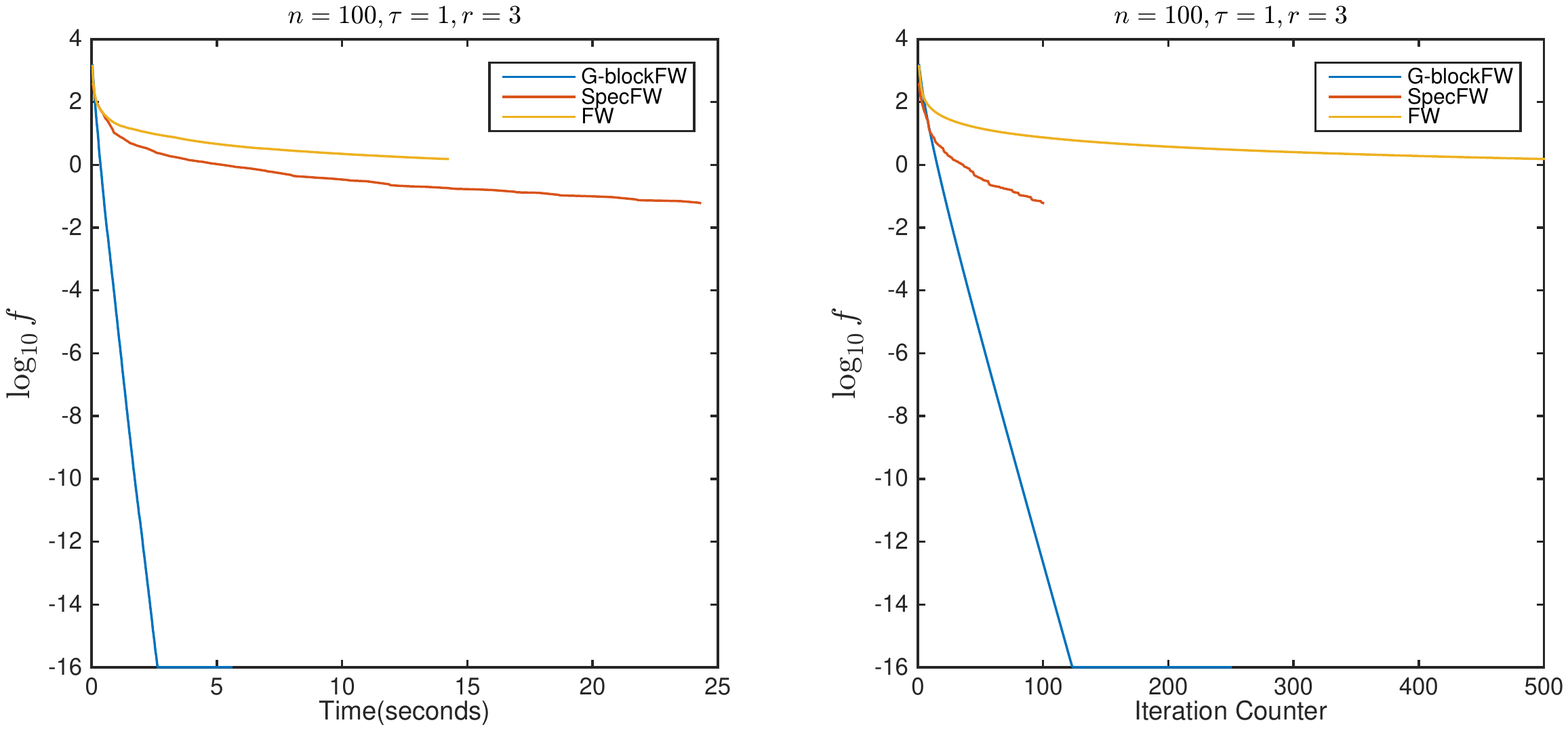}}
		\subfigure[$n=200$]{\includegraphics[width=0.7\linewidth]{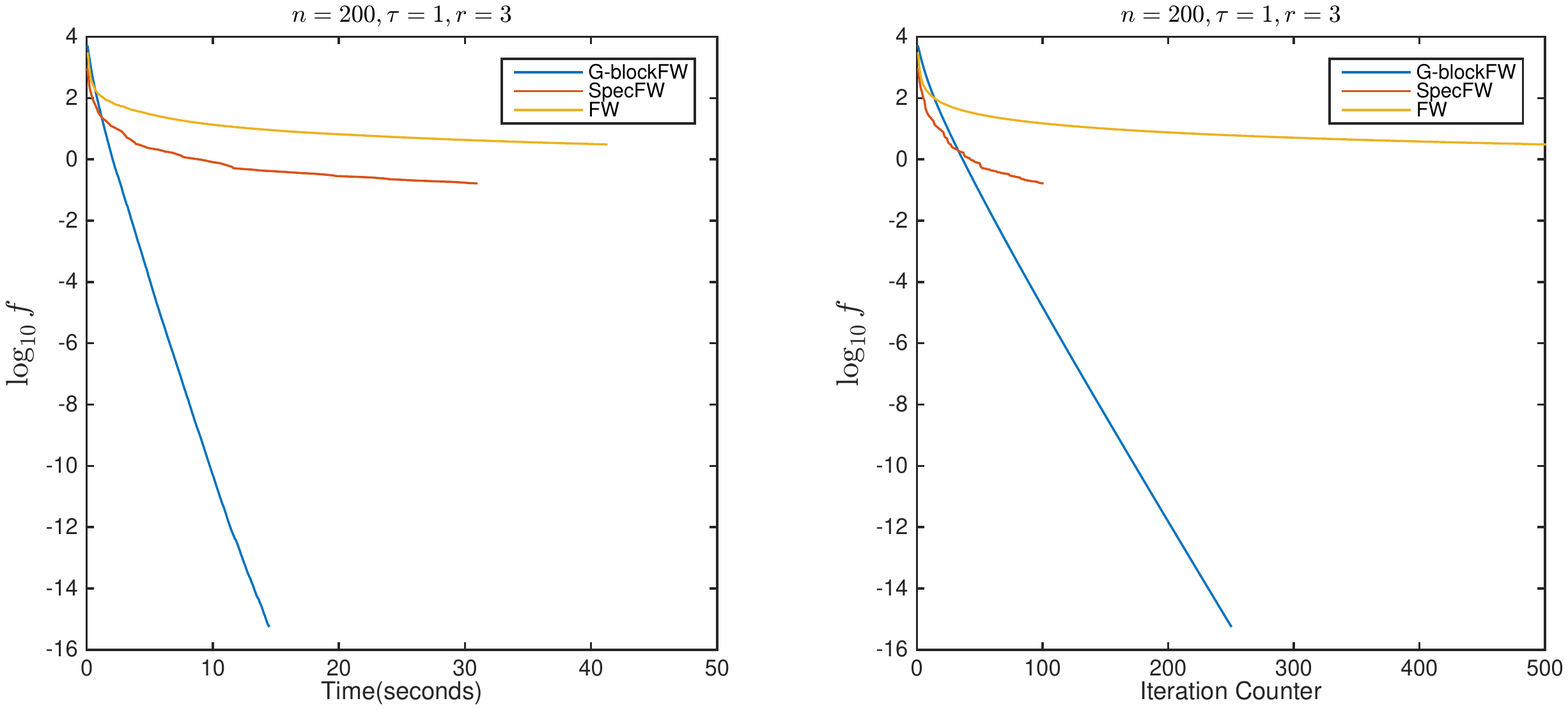} }
			\subfigure[$n=400$]{\includegraphics[width=0.7\linewidth]{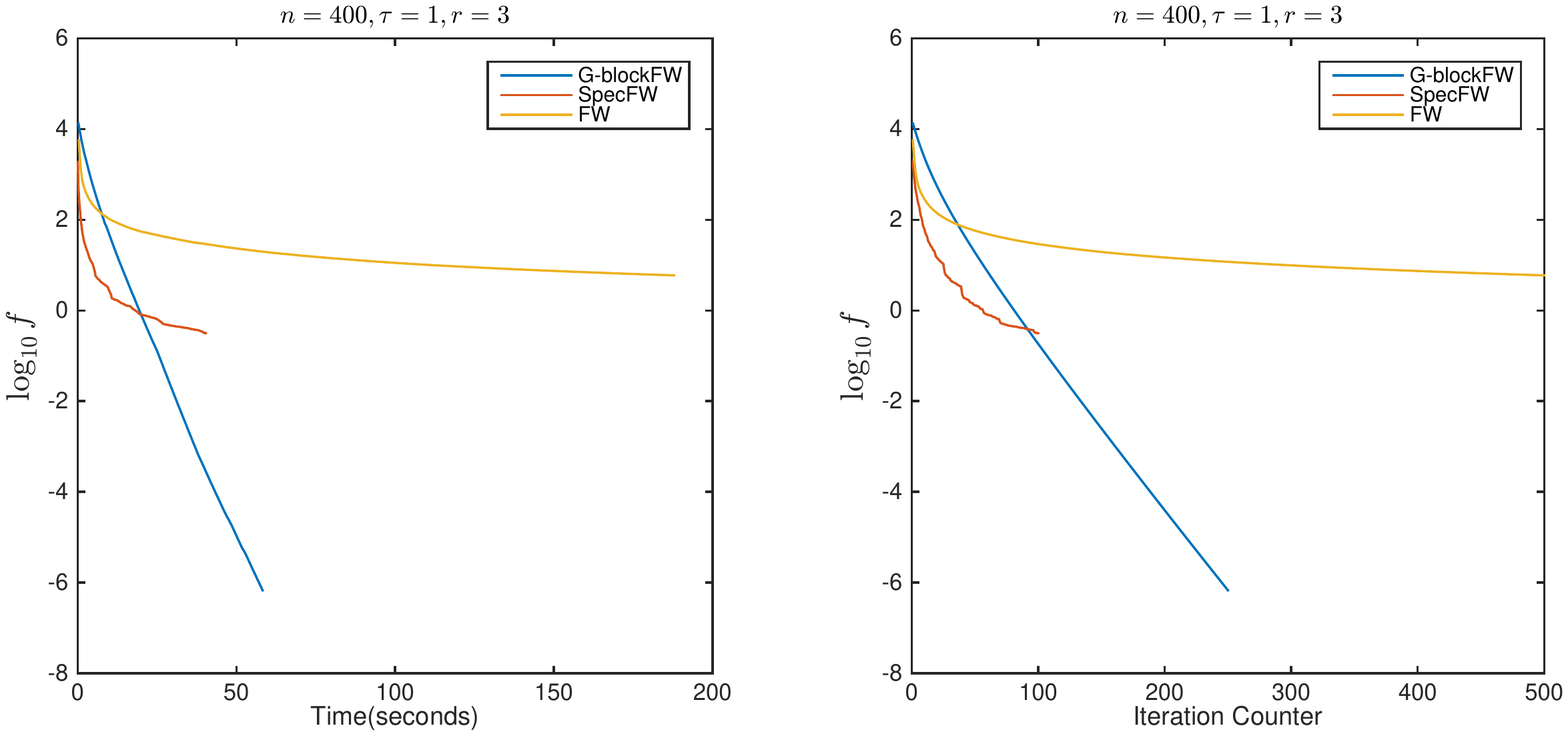} }
	\end{center}
	\caption{Comparison of algorithms under $\tau=1$, noise level $c=0$, and $k=4>\rsol$.}
	\label{fig:Nonoise}
\end{figure}


\end{document}